\documentclass[11pt]{article}

\usepackage{dashrule,arydshln,empheq,amsfonts,graphicx,amsmath,amssymb,amsthm,geometry,bbm,hyperref,nicefrac,enumerate,comment,fontawesome,mathtools,cases,stmaryrd,pict2e,picture,mathrsfs,mathabx,setspace,mathrsfs}
\usepackage{float}            
\usepackage{algorithm}
\usepackage{algpseudocode}
\usepackage{booktabs}
\usepackage{siunitx}

\usepackage{color, colortbl}
\definecolor{Gray}{gray}{0.9}


\newcommand{\Z}{\mathcal{Z}}
\newcommand{\Y}{\mathcal{Y}}

\newcommand{\R}{\mathbb{R}}
\newcommand{\N}{\mathbb{N}}
\newcommand{\E}{\mathbb{E}}

\renewcommand{\P}{\mathbb{P}}

\newcommand{\F}{\mathcal{F}}

\newcommand{\e}{\text{e}}
\renewcommand{\d}{\text{d}}

\usepackage{array}
\newcolumntype{P}[1]{>{\centering\arraybackslash}p{#1}}
\newcommand{\triple}{{\vert\kern-0.25ex\vert\kern-0.25ex\vert}}
\usepackage{mathtools}
\newcommand{\defeq}{\vcentcolon=}

\usepackage{geometry}
\geometry{
 a4paper,
 total={170mm,257mm},
 left=20mm,
 top=15mm
 }

\usepackage{xcolor}

\newtheorem{lemma}{Lemma}[section]
\newtheorem{remark}{Remark}[section]

\newtheorem{assumption}{Assumption}

\newtheorem{theorem}{Theorem}[section]

\newtheorem{example}{Example}[section]

\theoremstyle{definition}

\newcommand{\footremember}[2]{%
    \footnote{#2}
    \newcounter{#1}
    \setcounter{#1}{\value{footnote}}%
}

\setcounter{secnumdepth}{5}
\setcounter{tocdepth}{6}

\begin{document}
\title{A deep solver for backward stochastic Volterra integral equations}
\author{
  Alessandro Gnoatto\footremember{AG}{Department of Economics, University of Verona, via Cantarane, 24 - 37129 Verona, Italy.
  \\Email: \href{mailto:alessandro.gnoatto@univr.it}{alessandro.gnoatto@univr.it}}%
  \and 
  Camilo Andrés García Trillos\footremember{CT}{Department of Mathematics, University College London, Gower Street, London.
  Email: \href{mailto:camilo.garcia@ucl.ac.uk}{camilo.garcia@ucl.ac.uk}}%
  \and
    Kristoffer Andersson\footremember{KA}{ Department of Economics, University of Verona, via Cantarane, 24 - 37129 Verona, Italy.
  \\Email: \href{mailto:kristoffer.andersson@univr.it}{kristofferherbert.andersson@univr.it}}%
  \and
}

\maketitle
\vspace{0.4cm}

\begin{abstract} 
We present the first deep-learning solver for backward stochastic Volterra integral equations (BSVIEs) and their fully-coupled forward-backward variants. The method trains a neural network to approximate the two solution fields in a single stage, avoiding the use of nested time-stepping cycles that limit classical algorithms. For the decoupled case we prove a non-asymptotic error bound composed of an a posteriori residual plus the familiar square root dependence on the time step. Numerical experiments are consistent with this rate and reveal two key properties: \emph{scalability}, in the sense that accuracy remains stable from low dimension up to 500 spatial variables while GPU batching keeps wall-clock time nearly constant; and \emph{generality}, since the same method handles coupled systems whose forward dynamics depend on the backward solution. These results open practical access to a family of high-dimensional, time-inconsistent problems in stochastic control and quantitative finance.
\end{abstract}

\section{Introduction}
Backward stochastic Volterra integral equations (BSVIEs) represent a natural extension of backward stochastic differential equations (BSDEs) by allowing for memory effects and more general dependence structures. This makes them well-suited for problems in finance, control theory, and other applications where past states influence future evolution.

To motivate the need for BSVIEs, first recall a standard situation.
Assume the state process satisfies the forward stochastic differential equation (FSDE)
\begin{equation*}
    X_t
 = x_0 + \int_{0}^{t} b\bigl(s,X_s\bigr)\,\d s
      + \int_{0}^{t} \sigma\bigl(s,X_s\bigr)\,\d W_s.
\end{equation*}
For functions $g$ and $f$ that depend on arguments taken at a single time, the quantity  
\begin{equation*}
    Y_t = \E\!\left[\,g(X_T)+\int_{t}^{T} f(s,X_s)\,ds \,\big|\,\mathcal F_t\right]
\end{equation*}
admits the well known BSDE representation  
\begin{equation*}
    Y_t
 = g(X_T)
   + \int_{t}^{T} f(s,X_s)\,ds
   - \int_{t}^{T} Z_s\,dW_s.
\end{equation*}
We now expand the setting by allowing $g$ and $f$ to also depend on the evaluation date $t$.  
Set  
\begin{equation}\label{eq:cond_exp_ts}
    Y_t
 = \E\!\left[\,g(t,X_T)+\int_{t}^{T} f(t,s,X_s)\,ds \,\big|\,\mathcal F_t\right].
\end{equation}
When this extra time argument can be factorized in a multiplicative way, a change of variables restores the ordinary BSDE framework.  
For illustration, let us consider an example that can be interpreted as a pricing model in finance.  Take a possibly stochastic short rate $r$ and define  
\begin{equation*}
    g(t,x)=\exp\!\Bigl(-\!\int_{t}^{T} r_s\,ds\Bigr)\,\widetilde g(x),
\quad
f(t,s,x)=\exp\!\Bigl(-\!\int_{t}^{s} r_u\,du\Bigr)\,\widetilde f(s,x) .
\end{equation*}
Introduce the discount factor \(M_t=\exp\!\bigl(-\!\int_{0}^{t} r_u\,du\bigr)\).  
Since \(e^{-\int_{t}^{s} r_u\,du} = M_s/M_t\) we obtain  
\begin{equation*}
    Y_t\,M_t
 = \E\!\left[\,M_T\,\widetilde g(X_T)
     + \int_{t}^{T} M_s\,\widetilde f(s,X_s)\,ds \,\big|\,\mathcal F_t\right].
\end{equation*}
The term inside the expectation no longer depends on the evaluation date.  
Hence $Y_t\,M_t$ satisfies a classical BSDE and dividing by $M_t$ recovers $Y_t$.

The need for a wider Volterra formulation appears once the second time argument cannot be removed by such a factorization.  
In this scenario, the backward representation of~\eqref{eq:cond_exp_ts} is instead a BSVIE of the form
\begin{equation*}
Y_t=g(t,X_T)+\int_t^Tf(t,s,X_s)\d s-\int_t^TZ_{t,s}\d W_s.
\end{equation*}
Contrary to the BSDE case the control process $Z$ now carries two time indices, where the additional dependence on the evaluation date $t$ reflects the memory that the Volterra structure introduces and is discussed in detail in the following sections. Similar to the classical BSDE framework, the functions $f$ and $g$ may also depend on the solution pair $(Y,Z)$ itself, so that the conditional expectation in~\eqref{eq:cond_exp_ts} becomes implicit.  
In addition one can consider fully coupled systems in which the coefficients $b$ and $\sigma$ of the forward equation depend on the BSVIE solution.  

Below we present two practical situations in which the conditional expectation naturally involves both the observation time $t$ and the integration time $s$.  
In both cases, the two–clock dependence cannot be disentangled. Consequently, the standard BSDE machinery breaks down, making a Volterra formulation essential.
\begin{example}[Social discounting, see \cite{brody2018social}]
    A sovereign wealth fund must value uncertain long-dated cash-flows $C_s$ over the horizon $[0,T]$.  
    To reflect inter-generational welfare it adopts the declining hyperbolic kernel
    \[
        D(t,s)=\frac{1}{\bigl(1+\alpha(s-t)\bigr)^{\beta}},\qquad s\ge t,\ \alpha,\beta>0,
    \]
    whose slower decay ensures that payments decades ahead are not virtually ignored.  
    The present social cost at time $t$ is
    \[
        Y_t=\E\!\left[\int_t^T D(t,s)\,C_s\,\emph{d}s\;\middle|\;\F_t\right].
    \]
    Because $D$ depends on both the evaluation time $t$ and the integration time $s$, no driver of the form $f(s,\cdot)$ exists, so a BSDE representation fails.  
    The appropriate formulation is the BSVIE
    \[
        Y_t=\int_t^T D(t,s)\,C_s\,\emph{d}s-\int_t^T Z_{t,s}\,\emph{d}W_s,
    \]
    whose control field $Z_{t,s}$ carries both time indices and captures the memory effect of the moving discount kernel.
\end{example}

\begin{example}
Suppose a bank enters into an OTC derivative position with terminal payoff $g$ against a counterparty subject to default risk. The default risk of the counterparty is described, \textcolor{black}{ in a standard reduced-form setting for credit risk, }by the deterministic positive hazard rate function $\lambda$. We assume that, in case of default, the reference value of the transaction is the whole value of the position including the valuation adjustment due to counterparty risk. This gives rise to the valuation formula (see e.g. \cite{biagini2021unified}, \textcolor{black}{for a general treatment}) 
\begin{align}
\label{eq:fullFBSDE}
    Y_t=\mathbb{E}\left[\left.e^{-\int_t^T r_s \emph{d}s}g(X_T)-LGD\int_t^Te^{-\int_t^s (r_u+\lambda_u) \emph{d}u}(Y_s)^+\textcolor{black}{\lambda_s}\emph{d} s\right|\mathcal{F}_t\right]
\end{align}
where $r$ is the deterministic function describing the overnight rate and $LGD\in(0,1]$ is a constant loss given default. \textcolor{black}{The first term in \eqref{eq:fullFBSDE} represents the clean value, in the terminology of \cite{biagini2021unified}, i.e. the standard risk-neutral price of the claim, whereas the integral term gives rise to the credit value adjustment.} While the discount factor can be factorized as above, the presence of the hazard rate turns this valuation problem into a BSVIE
\begin{align}
Y_t = e^{-\int_t^T r_s \emph{d}s}g(X_T)-LGD\int_t^Te^{-\int_t^s (r_u+\lambda_u) \emph{d}u}(Y_s)^+\textcolor{black}{\lambda_s}\emph{d} s-\int_t^T Z_{t,s}\emph{d} W_s    
\end{align}
\end{example}

Beyond serving as backward representations of conditional expectations, BSVIEs provide a powerful framework for time inconsistent stochastic optimal control problems. When an objective functional changes with the initial date, whether through non exponential discounting, dynamic risk measures, or any other mechanism, the dynamic programming principle collapses and the classical BSDE adjoint is no longer adequate. Because a BSVIE driver may depend simultaneously on the evaluation time and the integration time, it naturally captures the evolving preferences and path dependence that come with time inconsistency, yielding equilibrium conditions that can be derived either through a Pontryagin maximum principle or, in an HJB-like formulation, by treating the BSVIE variable $Y$ as the value function. In practice, the BSVIE is coupled with an FSDE or a forward stochastic Volterra integral equation (FSVIE), producing a well-posed FSDE-BSVIE or forward-backward stochastic Volterra integral equation (FBSVIE) system whose solution characterizes admissible equilibrium controls even when the underlying value process is non-Markovian. See, for instance, \cite{yong2012time,wang2021time,wang2021closed} for time-inconsistent stochastic optimal control problems formulated as FBSVIEs. In \cite{wang2024optimal} the authors study optimal control of FBSDEs, where the state itself follows a controlled FBSDE. This setting naturally leads to an FBSDE-BSVIE system and even accommodates a conditional mean variance portfolio optimization problem.

Finally, we mention dynamic risk measures, which is another area where the two-time structure of a BSVIE is essential. When the exposure is a whole cash-flow stream and the risk weights or discount factors shift with the observation date, a classical BSDE cannot retain the resulting memory and horizon-dependence, whereas a BSVIE captures both through its simultaneous $(t,s)$ driver. This representation underlies modern insurance reserving, capital allocation rules for banking groups, and portfolio policies constrained by pathwise VaR or CVaR limits; see, for example, \cite{yong2007continuous,di2024fully,wang2021recursive}. 

Since the seminal work of \cite{lin2002adapted} launched the field, the theory of BSVIEs has expanded rapidly, see, for instance, the introduction of type--II BSVIEs (when the driver takes both $Z_{t,s}$ and $Z_{s,t}$ as inputs) in \cite{yong2006backward}, the well‑posedness and regularity of M‑solutions for type--II BSVIEs\footnote{\color{black}Type-I BSVIEs are those where $Y(t)$ depends only on $Z(t,s)$ for $s \ge t$, while type-II BSVIEs may also involve $Z(s,t)$, see e.g., \cite{yong2013backward} for a clear survey.\color{black}} in \cite{yong2008well} and control–theoretic applications surveyed in \cite{yong2013backward}.  By contrast, the numerical side has hardly kept pace.

On the numerical side, the literature is remarkably thin: to the best of our knowledge there are only three published schemes for BSVIEs: (i) the finite‑difference analysis in the PhD thesis of \cite{pokalyuk2012discretization};  
(ii) an implicit backward‑Euler scheme for type‑I BSVIEs proposed by \cite{wang2016numerical}; and  
(iii) the recent explicit backward‑Euler method for type‑II equations of \cite{hamaguchi2023approximations}. The discretization schemes put forward in \cite{pokalyuk2012discretization,wang2016numerical,hamaguchi2023approximations} assume that the conditional expectations can be computed exactly; consequently, they remain semi-discrete and cannot be executed without an additional (unspecified) approximation layer. To the best of our knowledge, the algorithm developed in the following is the first fully implementable end-to-end solver for BSVIEs.
 
By contrast, BSDEs already have several neural-network solvers, pioneered by the seminal deep-BSDE method of Han, Jentzen, and E \cite{han2018solving}. Following this work, several deep learning-based strategies have emerged, notably \cite{beck2021solving,beck2019machine,raissi2024forward,ji2020three,andersson2022deep,andersson2025deep}, with convergence analyses provided in, e.g., \cite{han2020convergence,hutzenthaler2020proof,Grohs2018APT,jentzen2018proof,andersson2023convergence,reisinger2024posteriori, negyesi2024generalized, gnoatto2025convergence}. Concurrently, a separate branch known as backward-type methods, closer in spirit to classical dynamic programming algorithms, has developed, see e.g., \cite{hure2020deep,chan2019machine,fang2009novel,Balint,kapllani2025backward,germain2022approximation}. For a comprehensive survey of numerical methods for approximating BSDEs and PDEs, see \cite{chessari2023numerical}, and for neural-network-based approaches specifically, we refer to \cite{beck2023overview}. Yet no neural‑network‑based method has been put forward for BSVIEs. This imbalance between abundant theory and scarce algorithms motivates the present work.

In this paper, we contribute to the literature on BSVIEs in the following ways: \begin{enumerate} \item We propose a neural network-based approximation method for BSVIEs. The method directly approximates the functional form of the BSVIE solution processes and does not rely on reformulating the problem as a family of BSDEs. \item We prove that the overall (mean-squared) simulation error of our method can, up to a multiplicative constant, be bounded by the size of the temporal discretization and the mean-squared error of the free-term condition (the BSVIE analog of the terminal condition in BSDEs). In this sense, our analysis extends the results of \cite{han2020convergence} to the BSVIE framework. \end{enumerate}
Taken together, these two results deliver the first machine‑learning solver for BSVIEs that comes with a non‑asymptotic error guarantee, thereby closing the algorithmic gap identified above and paving the way for high‑dimensional, path‑dependent applications that were previously out of reach.

In Section~\ref{sec:preliminaries} we state the class of equations studied and the assumptions that guarantee existence and uniqueness of their solutions.  
Section~\ref{sec:var_discretization} reformulates the problem in variational form and introduces its Euler--Maruyama time discretization.  
The error analysis is carried out in Section~\ref{sec:error_analysis}.  
Section~\ref{sec:fully_NN} describes the complete algorithm and the main neural-network details.  
Finally, Section~\ref{sec:numerical} reports numerical results.

\section{Preliminaries}\label{sec:preliminaries}
This section sets out the notation and presents the two problem settings we study. First, we introduce the decoupled FSDE-BSVIE and list the assumptions that ensure a unique adapted solution. This is the setting used for the error analysis. We then show how the same algorithm applies to a coupled FSDE-BSVIE and briefly recall the known existence and uniqueness results for this case. The algorithm can be implemented for both settings, but the error analysis in this paper is carried out only for the first one.

Throughout this paper, we let $T\in(0,\infty)$, $d,\ell\in\N$, $x_0\in\R^d$, $(W_t)_{t\in[0,T]}$ be an $\ell-$dimensional standard Brownian motion on a filtered probability space $(\Omega,\F,(\F_t)_{t\in[0,T]},\P)$ and define $\Delta[0,T]^2\defeq\{(t,s)\in[0,T]^2\,\big|\,t\leq s\}$.

\subsection{A decoupled FSDE-BSVIE}\label{sec:decoupled_FBSVIE}
In this subsection, the problem coefficients are given by $x_0\in\R^d$, $b\colon [0,T]\times\R^d\to\R^d$, $\sigma\colon[0,T]\times\R^d\to\R^{d\times \ell}$, $g\colon[0,T]\times\R^d\to\R$ and $f\colon\Delta[0,T]^2\times\R^d\times\R\times\R^\ell\to\R$. We consider a decoupled FSDE-BSVIE of the form
\begin{equation}
\begin{cases}
    \label{eq:FBSVIE}
    \displaystyle
    X_t=x_0 + \int_0^tb(s,X_s)\d s + \int_0^t\sigma(s,X_s)\d W_s,
    \\[6pt]
    \displaystyle Y_t=g(t,X_T) + \int_t^Tf(t,s,X_s,Y_s,Z_{t,s})\d s -\int_t^TZ_{t,s}^\top\d W_s.
    \end{cases}
\end{equation}
 We let the following assumptions, which are equivalent to Assumptions A1-A4 in \cite{wang2016numerical} hold true.
\begin{assumption}\label{ass:1}
For $(t_1,s_1),(t_2,s_2)\in\Delta [0,T]^2$, $x\in \R^d$, $y\in\R$ and $z\in\R^\ell$, there exists a constant $K_1$ such that
\begin{align*}
    |f(t_1,s_1,x,y,z)-f(t_2,s_2,x,y,z)| + |g(t_1,x)-g(t_2,x)|&\leq K_1(|t_1-t_2|^{1/2} + |s_1-s_2|^{1/2}),\\
    |f(\cdot,\cdot,0,0,0)| + |g(\cdot,0)|&\leq K_1.
\end{align*}
\color{black}
Moreover, \(f\) and \(g\) have continuous and uniformly bounded first- and second-order
partial derivatives with respect to \(x,y,z\) (for \(f\)) and \(x\) (for \(g\)), with all these
derivatives bounded by \(K_1\). In particular, \(f\) is globally Lipschitz in \((x,y,z)\) with Lipschitz constant at most \(K_1\),
and \(g\) is globally Lipschitz in \(x\) with Lipschitz constant at most \(K_1\).
\color{black}
\end{assumption}

\begin{assumption}\label{ass:2}
For $x,y\in \R^d$, and $t,s\in[0,T]$, there exists a constant $K_2$ such that
\begin{align*}
    |b(t,x)-b(t,y)|+|\sigma(t,x)-\sigma(t,y)|&\leq K_2|x-y|,\\
    |b(t,x)-b(s,x)| + |\sigma(t,x)-\sigma(s,x)|&\leq K_2|t-s|^{1/2},\\
    |b(\cdot,0)| + |\sigma(\cdot,0)|&\leq K_2.
\end{align*}
\end{assumption}

The above assumptions are sufficient to guarantee the existence of a unique adapted solution to the decoupled FSDE-BSVIE~\eqref{eq:FBSVIE}, which is stated in the following theorem.

\begin{theorem}\label{thm:wang_etc}
    Under assumptions~\ref{ass:1}-\ref{ass:2}, it holds that:\begin{enumerate}
        \item The FSDE~\eqref{eq:FBSVIE} admits a unique adapted, continuous, square-integrable, solution $X=(X_t)_{t\in[0,T]}$,
        \item The BSVIE~admits a unique solution $(Y,Z)=(Y_s,Z_{t,s})_{(t,s)\in\Delta[0,T]^2)}$  where $Y$ is adapted and, for each fixed $t$, the map $s\mapsto Z_{t,s}$ is $\mathcal F_s$‑adapted on $[t,T]$.
    \end{enumerate}
    \begin{proof}
    For 1, we refer to classical results on well-posedness for FSDEs in \cite{oksendal2013stochastic} and 2 is exactly \cite[Theorem 2.3]{wang2016numerical}.
    \end{proof}
\end{theorem}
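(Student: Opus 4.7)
The plan is to address the two assertions in sequence, treating them as consequences of classical Picard iteration arguments in appropriately chosen spaces of adapted processes; the authors in fact short-circuit both by citing standard references, so my proof proposal sketches the underlying mechanism rather than reproducing all the book-keeping.

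For part 1, Assumption~\ref{ass:2} provides uniform Lipschitzness in space, $\tfrac12$-Hölder continuity in time, and at most linear growth of $b$ and $\sigma$. I would define the Picard iterates $X^0_t := x_0$ and $X^{n+1}_t := x_0 + \int_0^t b(s,X^n_s)\d s + \int_0^t \sigma(s,X^n_s)\d W_s$, control $\E\sup_{u\le t}|X^{n+1}_u-X^n_u|^2$ via the Burkholder-Davis-Gundy inequality and the Lipschitz property to obtain a bound of the form $C\int_0^t \E|X^n_u-X^{n-1}_u|^2\d u$, and iterate to produce a Cauchy sequence in $L^2(\Omega;C([0,T];\R^d))$ whose limit solves the FSDE. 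Uniqueness and square-integrability follow from a Grönwall argument applied to the squared difference of two candidate solutions, and path continuity is inherited from the construction.

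For part 2, the strategy is a contraction argument on the Banach space $\mathcal{H}$ of pairs $(Y,Z)$ enjoying the required adaptedness, equipped with a weighted norm such as
$$\|(Y,Z)\|_\beta^2 := \sup_{t\in[0,T]} e^{-\beta t}\,\E\!\left[\int_t^T \left(|Y_s|^2 + |Z_{t,s}|^2\right)\d s\right]$$
for a $\beta>0$ to be tuned. Given $(y,z)\in\mathcal{H}$, I form the parameterized free term $\psi(t):= g(t,X_T) + \int_t^T f(t,s,X_s,y_s,z_{t,s})\d s$; for each fixed $t$, the square-integrable martingale $s\mapsto \E[\psi(t)\mid\F_s]$ admits, by the Brownian martingale representation theorem, a unique $\F_s$-adapted integrand $\tilde Z_{t,\cdot}$ on $[t,T]$. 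Setting $\tilde Y_t := \E[\psi(t)\mid\F_t]$ and evaluating at $s=T$ reproduces the BSVIE identity, so the map $\Phi:(y,z)\mapsto(\tilde Y,\tilde Z)$ is well-defined into $\mathcal{H}$.

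The heart of the matter, and the main obstacle relative to the classical BSDE setting, is to show that $\Phi$ is a strict contraction on $(\mathcal{H},\|\cdot\|_\beta)$ for $\beta$ sufficiently large. The Lipschitz bounds from Assumption~\ref{ass:1} together with Itô's isometry deliver pointwise-in-$t$ estimates of the differences $\Phi(y,z)-\Phi(y',z')$, but the parameter $t$ enters both inside the driver $f(t,\cdot)$ and through the filtration used in the representation step, so the estimate must be made uniform in $t\in[0,T]$; this is precisely what the exponential weight $e^{-\beta t}$ is designed to absorb, turning Grönwall-type factors into a contraction constant strictly less than one. Banach's fixed-point theorem then yields the unique solution with the claimed adaptedness, and I would, as the authors do, invoke \cite{oksendal2013stochastic} for part 1 and \cite[Theorem 2.3]{wang2016numerical} for part 2 to shortcut the verification.
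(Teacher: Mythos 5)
Your proposal is correct and ultimately takes the same route as the paper, whose proof consists entirely of delegating part~1 to classical SDE well-posedness in \cite{oksendal2013stochastic} and part~2 to \cite[Theorem 2.3]{wang2016numerical}; your added Picard-iteration and weighted-norm contraction sketch is a faithful outline of the standard arguments underlying those citations (including the key point that the martingale representation step must be performed for each fixed $t$ separately, which is what produces the two-time-index field $Z_{t,s}$). No gap.
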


\subsection{A coupled FSDE-BSVIE}
In this subsection, the problem coefficients are given by $x_0\in\R^d$, $b\colon [0,T]\times\R^d\times\R\times\R^\ell\to\R^d$,  $\sigma\colon[0,T]\times\R^d\times\R\to\R^{d\times \ell}$, $g\colon[0,T]\times\R^d\times\R^d\to\R$ and $f\colon\Delta[0,T]^2\times\R^d\times\R\times\R^\ell\times\R^\ell\to\R$. We consider a coupled FSDE-BSVIE of the form
\begin{equation}
\begin{cases}
    \label{eq:coupled_FSDE_BSVIE}
    \displaystyle
    X_t=x_0 + \int_0^tb(s,X_s,Y_s,Z_{s,s})\d s + \int_0^t\sigma(s,X_s,Y_s)\d W_s,
    \\[6pt]
    \displaystyle Y_t=g(t,X_t,X_T) + \int_t^Tf(t,s,X_s,Y_s,Z_{t,s},Z_{s,s})\d s -\int_t^TZ_{t,s}^\top\d W_s.
    \end{cases}
\end{equation}
The coupled FSDE-BSVIE was first presented in~\cite{wang2021time} as a model for a time-inconsistent stochastic optimal control problem. In that work the authors showed that \eqref{eq:coupled_FSDE_BSVIE} can be rewritten as an HJB equation. They proved that, under suitable conditions, the HJB has a classical solution, which in turn guarantees that the FSDE-BSVIE has a unique adapted solution. Because we do not study this example in our error analysis, we omit the detailed conditions and refer the reader to~\cite{wang2021time} for full descriptions.

\section{Variational formulations and temporal discretization}\label{sec:var_discretization}
We first rewrite the problem as a variational problem that is continuous in time. This form is similar to the one used in the deep BSDE method~\cite{han2016deep}, but is adjusted to BSVIEs. Next, we discretize the time interval to obtain a semi-discrete problem, which then serves as the starting point of our numerical method.
\subsection{A time continuous variational formulation}
We begin with the familiar variational formulation for a standard FBSDE. This example gives the reader a clear reference point before we extend the same ideas to the broader FSDE-BSVIE cases that follow. Consider the following variational problem:
\begin{equation}\label{eq:var_FBDE}\begin{dcases}
\underset{y_0,\Z}{\mathrm{minimize}}\ 
 \E|Y_T^{y_0,\mathcal{Z}}-g(X_T^{y_0,\mathcal{Z}})|^2,\quad \text{where for}\quad t\in[0,T],\\
X_t^{y_0,\Z}=x_0+\int_0^tb(s,X_s^{y_0,\Z},Y_s^{y_0,\Z},\Z_s)\d s + \int_0^t\sigma(s,X_s^{y_0,\Z},Y_s^{y_0,\Z},\Z_s)\d W_s,\\
Y_t^{y_0,\mathcal{Z}}=y_0 - \int_0^tf(s,X_s^{y_0,\Z},Y_s^{y_0,\Z},\Z_s)\d s + \int_0^t(Z_s^{y_0,\Z})^\top\d W_s.
    \end{dcases}
\end{equation}
A solution to the FBSDE
\begin{equation}
\begin{cases}
    \label{eq:coupled_FBSDE}
    \displaystyle
    X_t=x_0 + \int_0^tb(s,X_s,Y_s,Z_s)\d s + \int_0^t\sigma(s,X_s,Y_s,Z_s)\d W_s,
    \\[6pt]
    \displaystyle Y_t=g(X_T) + \int_t^Tf(s,X_s,Y_s,Z_s)\d s -\int_t^TZ_s^\top\d W_s,
    \end{cases}
\end{equation}
clearly solves \eqref{eq:var_FBDE} and standard well-posedness conditions for the FBSDE guarantee that this solution is unique. Moreover, under suitable conditions, one further obtains the feedback forms $y_0=y_0(x_0)$ and $\Z_t = \Z\bigl(t, X_t^{y_0,\Z}\bigr)$. This variational formulation is exactly what inspires the deep-BSDE method~\cite{han2018solving}.

Just as an FBSDE can be written in a variational form, an FSDE-BSVIE can as well.
We seek processes \(Y\) and \(Z\) that satisfy the free-term dynamics.
Choose coefficients \(\varphi, b, \sigma, g,\) and \(f\) such that the FSDE-BSVIE system is either decoupled, as  in~\eqref{eq:FBSVIE} or coupled, as in~\eqref{eq:coupled_FSDE_BSVIE}.
We assume that the chosen system has a unique adapted solution.
With this assumption the equation is equivalent to the following variational problem:
\begin{equation}\label{eq:var_FBSVIE}\begin{dcases}
\underset{\Y,\Z}{\mathrm{minimize}}\ \int_0^T
 \E|\mathscr{Y}_T^{\mathcal{Y},\mathcal{Z}}(t)-g(t,X_t^{\mathcal{Y},\mathcal{Z}},X_T^{\mathcal{Y},\mathcal{Z}})|^2\d t,\quad \text{where for}\quad t\in[0,T],\\
X_t^{\Y,\Z}=x_0+\int_0^tb(s,X_s^{\Y,\Z},\Y_s,\Z_{s,s})\d s + \int_0^t\sigma(s,X_s^{\Y,\Z},\Y_s)\d W_s,\\
\mathscr{Y}_T^{\mathcal{Y},\mathcal{Z}}(t)=\Y_t - \int_t^Tf(t,s,X_s^{\Y,\Z},\Y_s,\Z_{t,s},\Z_{s,s})\d s + \int_t^T\Z_{t,s}^\top\d W_s.
    \end{dcases}
\end{equation}
Here $\mathscr{Y}_T^{\Y,\Z}(t)$ is $\F_T$-measurable and can be viewed as the target from the perspective of the evaluation time $t$. Concretely, at time $t$ one wants to find processes $\bigl(\Y_s\bigr)_{s\in[t,T]}$ and $\bigl(\Z_{t,s}\bigr)_{s\in[t,T]}$ on the interval $[t,T]$ such that
\begin{equation*}
  \mathscr{Y}_T^{\Y,\Z}(t) =
g\bigl(t,X_t^{\Y,\Z},X_T^{\Y,\Z}\bigr).
\end{equation*}
Then, at a later time $t+u\leq T$, one looks again for processes $\bigl(\Y_s\bigr)_{s\in[t+u,T]}$ and $\bigl(\Z_{t+u,s}\bigr)_{s\in[t+u,T]}$ so that
\begin{equation*}
\mathscr{Y}_T^{\Y,\Z}(t+u) \;=\;
g\bigl(t+u,\;X_{t+u}^{\Y,\Z},\;X_T^{\Y,\Z}\bigr).
\end{equation*}
Importantly, this should not be interpreted to mean that $\Y_s$ and $\Z_{t,s}$ for $s \in [t,T]$ must be $\mathcal{F}_t$-measurable. Instead, it simply reflects the idea that, at each time $t$, one considers the relevant processes over the time interval $[t,T]$ in order to satisfy the above relationship for $\mathscr{Y}_T^{\Y,\Z}(t)$.

In this setting, under suitable conditions, the processes \(Y_\cdot\) and \(Z_{\cdot,\cdot}\) can be 
expressed in the feedback forms $\Y_t = \Y\bigl(t, X_t^{\Y,\Z}\bigr)$, and  $\Z_{t,s} = \Z\bigl(t,s, X_t^{\Y,\Z}, X_s^{\Y,\Z}\bigr)$.

A solution to \eqref{eq:coupled_FSDE_BSVIE} clearly solves \eqref{eq:var_FBSVIE} since the objective 
function becomes identically zero. Furthermore, under appropriate conditions ensuring 
well-posedness of \eqref{eq:coupled_FSDE_BSVIE}, this solution is unique. 

\subsection{A time discrete variational formulation}

For some $N\in\N$, let $\pi\coloneq \{0=t_0<t_1<\cdots<t_{N-1}<t_N=T\}$ be an equidistant grid with $h=T/N$ and define $\pi(t)\defeq\inf\{k\,\big|\,t\in[t_k,t_{k+1})\}$ and $\Delta^\pi[0,T]^2\defeq\{\big(\pi(t),\pi(s)\big)\,\big|\,(t,s)\in\Delta [0,T]^2\}$. For $n\in\{0,1,\ldots,N-1\}$, let $\Delta W_n$ be a vector of $\ell$ i.i.d. normally distributed random variables, each with mean 0 and variance $h$. 

We now present the semi-discrete formulation of the variational problem \eqref{eq:var_FBSVIE}, under the assumption that the functions $\varphi$, $b$, $\sigma$, $f$, and $g$ are chosen such that the system corresponds to either the form given in \eqref{eq:FBSVIE} or in \eqref{eq:coupled_FSDE_BSVIE}.
This formulation employs the feedback forms for \(Y\) and \(Z\) introduced above, and reads
\begin{equation}\label{eq:var_disc_FBSVIE}\begin{dcases}
\underset{\Y,\Z}{\mathrm{minimize}}\ \sum_{n=0}^{N-1}
 \E\big|\mathscr{Y}_N^{\pi,\mathcal{Y},\mathcal{Z}}(n)-g\big(t_n,X_n^{\pi,\mathcal{Y},\mathcal{Z}},X_N^{\pi,\mathcal{Y}}\big)\big|^2h,\quad \text{where for}\quad t\in[0,T],\\
X_n^{\Y,\Z}=x_0+\sum_{k=0}^{n-1}b\big(t_k,X_k^{\pi,\Y,\Z},Y_k^{\pi,\Y,\Z}, Z_{k,k}^{\pi,\Y,\Z}\big)h
\\
\qquad\qquad+ \sum_{k=0}^{n-1}\sigma\big(t_k,X_k^{\pi,\Y,\Z},Y_k^{\pi,\Y,\Z}\big)\Delta W_k,\\
\mathscr{Y}_N^{\pi,\mathcal{Y},\mathcal{Z}}(n)=Y_n^{\pi,\Y,Z} - \sum_{k=n}^{N-1}f(t_n,t_k,X_k^{\pi,\Y,\Z},Y_k^{\pi,\Y,\Z},Z_{n,k}^{\pi,\Y,\Z},Z_{k,k}^{\pi,\Y,\Z})h + \sum_{k=n}^{N-1}(Z_{n,k}^{\pi,\Y,\Z})^\top\Delta W_k,\\
Y_n^{\pi,\Y,\Z}=\Y(t_n,X_n^{\pi,\Y,\Z}), \quad Z_{n,k}^{\pi,\Y,\Z}=\Z(t_n,t_k,X_n^{\pi,\Y,\Z},X_k^{\pi,\Y,\Z}).
    \end{dcases}
\end{equation}
Note that when we consider a decoupled FSDE-BSVIE, we have assumed that $g(t,x_t,x)=g(t,x)$, and hence the feedback form for $\Z$ reduces to $\Z(t_n,t_k,X_k^{\pi,\Y,\Z})$.

At this point the scheme is almost ready to implement.  
The final aspects to be cleared are
\begin{enumerate}
  \item an approximation of the expectation,  
  \item explicit prescriptions for the functions $\mathcal{Y}$ and $\mathcal{Z}$, and  
  \item a practical optimization routine.
\end{enumerate}
\paragraph*{Expectation.}
\paragraph*{Expectation.}
We approximate the expectations appearing in the loss by Monte Carlo simulation. 
For any fixed parameter choice, the Monte Carlo estimator of the expected loss is unbiased, with mean--square error $O(M^{-1})$ (equivalently, standard error $O(M^{-1/2})$) as the number of samples $M$ increases. 
Note that this rate concerns the accuracy of the Monte Carlo approximation of the expectation operator, not the convergence of the loss itself. 
The loss function is already a mean--squared quantity by definition, what converges with rate $O(M^{-1})$ is the estimator of that expectation, not the loss or its optimizer.

\paragraph*{Functions $\mathcal{Y}$ and $\mathcal{Z}$.}
In a subsequent section we represent $\mathcal{Y}$ and $\mathcal{Z}$ with neural networks. For the error analysis in the next section, however, we keep the choice of function class completely open.
\paragraph*{Optimization.}
The numerical experiments later in the paper employ mini–batch stochastic gradient descent with the Adam algorithm.  
The specific optimizer does not influence the error bounds developed in the following section, so its details are omitted for now.

\section{Error analysis}\label{sec:error_analysis}
In this section, we assume the setting of Section~\ref{sec:decoupled_FBSVIE}, i.e., we let the randomness of the BSVIE stem from an FSDE, which does not take the solution of a backward equation as inputs
\begin{equation}\label{eq:FSVIE}
        X_t=x_0 + \int_0^tb(s,X_s)\d s + \int_0^t\sigma(s,X_s)\d W_s.
\end{equation}
The aim is to prove an a posteriori error bound for the BSVIE
\begin{equation}
    \label{eq:BSVIE_reduced}
    \displaystyle Y_t=g(t,X_T) + \int_t^Tf(t,s,X_s,Y_s,Z_{t,s})\d s -\int_t^TZ_{t,s}^\top\d W_s.
\end{equation} We work under Assumptions~\ref{ass:1}-\ref{ass:2}; by Theorem~\ref{thm:wang_etc} these conditions guarantee a unique adapted solution.

Below we introduce the Euler--Maruyama scheme for the FSDE in~\eqref{eq:FBSVIE} which is used throughout this section
\begin{align} \label{eq:FSVIE_disc}
                \displaystyle X_n^\pi= x_0 + \sum_{k=0}^{n-1}b(t_k,X_k^\pi)h + \sum_{k=0}^{n-1}\sigma(t_k,X_k^\pi)\Delta W_k,\quad n\in\{0,1,\ldots,N\},
\end{align}
where the empty sum should be interpreted as zero. The following theorem states convergence and square integrability of \eqref{eq:FSVIE_disc}.
\begin{theorem}
    Let Assumption~\ref{ass:2} hold true. Then there exist constants $C_x$ and $K_x$ such that
    \begin{equation*}
        \sup_{t\in[0,T]}\E|X_t-X_{\pi(t)}^\pi|^2\leq C_x h,\quad \max_{n\in\{0,1,\ldots N\}}\E|X_n^\pi|^2\leq K_x.
    \end{equation*}
    \begin{proof}
        See for instance~\cite{kloedenplaten1992}.
    \end{proof}
\end{theorem}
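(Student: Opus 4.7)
The plan is to establish both claims by the standard Euler--Maruyama toolkit, adapted to accommodate the Hölder-$1/2$ time dependence permitted by Assumption~\ref{ass:2}. I would introduce the continuous-time interpolant
\begin{equation*}
\tilde X_t^\pi = x_0 + \int_0^t b(\pi(s), X_{\pi(s)}^\pi)\,\d s + \int_0^t \sigma(\pi(s), X_{\pi(s)}^\pi)\,\d W_s,
\end{equation*}
which satisfies $\tilde X_{t_n}^\pi = X_n^\pi$, and control the error $X_t-\tilde X_t^\pi$ by Itô isometry combined with Gronwall's inequality. The two stated bounds then follow by standard manipulations.

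First I would establish the moment bound $\max_n \E|X_n^\pi|^2 \leq K_x$. From Assumption~\ref{ass:2} I read off the linear growth $|b(t,x)|^2 + |\sigma(t,x)|^2 \leq C(1+|x|^2)$ (a consequence of the Lipschitz estimate combined with $|b(\cdot,0)|+|\sigma(\cdot,0)|\leq K_2$). Applying this to the recursion $X_{n+1}^\pi = X_n^\pi + b(t_n, X_n^\pi)h + \sigma(t_n, X_n^\pi)\Delta W_n$ and using that $\Delta W_n$ is independent of $\mathcal{F}_{t_n}$ with variance $h$, I obtain $\E|X_{n+1}^\pi|^2 \leq (1 + Ch)\E|X_n^\pi|^2 + Ch$, and discrete Gronwall closes this off with a constant $K_x$ depending only on $T$, $K_2$ and $|x_0|$.

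For the convergence rate I would write $X_t - X_{\pi(t)}^\pi = (X_t - \tilde X_t^\pi) + (\tilde X_t^\pi - X_{\pi(t)}^\pi)$. The second piece is immediate: for $s \in [t_k,t_{k+1})$ the definition of $\tilde X^\pi$ gives $\tilde X_s^\pi - X_k^\pi = \int_{t_k}^s b(t_k,X_k^\pi)\,\d u + \int_{t_k}^s \sigma(t_k, X_k^\pi)\,\d W_u$, and combining linear growth with the moment bound just obtained yields $\E|\tilde X_s^\pi - X_{\pi(s)}^\pi|^2 \leq C h$. For the first piece, I subtract the two integral equations, add and subtract $b(\pi(s),X_s)$ and $\sigma(\pi(s),X_s)$, and use Assumption~\ref{ass:2} to get
\begin{equation*}
|b(s,X_s) - b(\pi(s), X_{\pi(s)}^\pi)|^2 \leq 2K_2^2 h + 2K_2^2 |X_s - X_{\pi(s)}^\pi|^2,
\end{equation*}
and similarly for $\sigma$. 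After applying Cauchy--Schwarz to the drift integral, Itô isometry to the diffusion integral, and $|X_s - X_{\pi(s)}^\pi|^2 \leq 2|X_s - \tilde X_s^\pi|^2 + 2|\tilde X_s^\pi - X_{\pi(s)}^\pi|^2$, I arrive at
\begin{equation*}
\E|X_t - \tilde X_t^\pi|^2 \leq C_1 h + C_2 \int_0^t \E|X_s - \tilde X_s^\pi|^2\,\d s,
\end{equation*}
and the continuous Gronwall lemma yields $\sup_t \E|X_t - \tilde X_t^\pi|^2 \leq C h$. Combining both pieces gives the desired $C_x h$ bound uniformly in $t$.

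The main obstacle I foresee is purely bookkeeping: the Hölder-in-time coefficient property must be kept separate from the spatial Lipschitz contribution throughout the splitting, so that the $h^{1/2}$ terms from the time regularity accumulate as $h$ after squaring (rather than being swept into the spatial error that still needs Gronwall). No step is technically delicate, but one must resist the urge to collapse the mixed $(t,s)$--dependence too early, since doing so would destroy the sharp $O(h)$ mean-square rate and leave only an $O(h^{1/2})$ bound.
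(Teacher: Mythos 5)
Your proposal is correct and is precisely the standard Euler--Maruyama argument (continuous-time interpolant, linear growth from Lipschitz plus boundedness at zero, It\^o isometry, discrete and continuous Gr\"onwall) that the paper defers to by citing \cite{kloedenplaten1992} rather than writing out. The handling of the H\"older-$1/2$ time dependence of the coefficients is the one point where the classical proof needs the adaptation you describe, and you treat it correctly, so there is nothing to add.
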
 
Below we present a generic discretization scheme for a BSVIE of the form~\eqref{eq:BSVIE_reduced}
\begin{equation}\label{eq:BSVIE_disc}\begin{cases}
        \displaystyle
    \mathscr{Y}_N^\pi(n)=Y_n^\pi - \sum_{k=n}^{N-1}f(t_n,t_k,X_k^\pi,Y_k^\pi,Z_{n,k}^\pi)h+\sum_{k=n}^{N-1}(Z_{n,k}^\pi)^\top\Delta W_k,
    \\[6pt]
    Y_n^\pi=\Y(t_n,X_n^\pi),\quad Z_{n,k}^\pi=\Z(t_n,t_k,X_k^\pi),\quad (n,k)\in\Delta^\pi[0,T]^2.
    \end{cases}
\end{equation}
The scheme above is of feedback form for the approximations of $Y$ and $Z$. Moreover, it uses an Euler--Maruyama scheme to compute $\mathscr{Y}_N^\pi(n)$. 

\begin{assumption}\label{ass:4}
    The functions $\Y\colon[0,T]\times\R^d$ and $\Z\colon\Delta[0,T]^2\times\R^d\to\R^\ell$ satisfy a linear growth condition, i.e., for $t_0\in[0,T]$, $(t,s)\in\Delta[0,T]^2$, $x\in\R^d$ there exist constants $K_\Y$ and $K_\Z$ such that \begin{align*}
        |\Y(t_0,x)|^2\leq K_\Y(1+|x|^2),\quad |\Z(t,s,x)|^2\leq K_\Z(1+|x|^2).
    \end{align*}
\end{assumption}

In the following, we present the main result of this section, which is an a posteriori error estimate for the above scheme.
\begin{theorem}\label{thm:main_result}
Let Assumptions~\ref{ass:1}-\ref{ass:4} hold true and suppose that $f=f(t,s,x,y)$ or $f=f(t,s,x,z)$. Then for sufficiently small $h$, there exists a constant $C$, depending on $T$ and $K_1$, such that
\begin{equation*}
    \int_0^T\E|Y_t-Y_{\pi(t)}^\pi|^2\emph{d} t + \int_0^T\int_t^T\E|Z_{t,s}-Z_{\pi(t),\pi(s)}^\pi|^2\emph{d} s\emph{d} t\leq C\big(h+\sum_{n=0}^{N-1}\E|\mathscr{Y}_N^\pi(n)-g(t_n,X_N^\pi)|^2h\big).
\end{equation*}
\end{theorem}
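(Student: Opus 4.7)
The plan is to adapt the a posteriori BSDE argument of \cite{han2020convergence} to the BSVIE setting by running it at each evaluation time $t_n$ and tying the resulting family of estimates together. First, I would invoke the H\"older-$1/2$ regularity of $Y$ and of $Z$ in both of its time arguments (which holds for BSVIEs under Assumptions~\ref{ass:1}--\ref{ass:2}; see \cite{wang2016numerical}), together with the Euler--Maruyama rate $\sup_{t}\E|X_t-X^\pi_{\pi(t)}|^2\leq C_x h$ already recorded above, in order to absorb the differences between continuous-time and grid-point values of $X$, $Y$ and $Z$ into an $O(h)$ remainder. This reduces the left-hand side of the theorem to controlling
\begin{equation*}
\smallsum_{n=0}^{N-1}\E|Y_{t_n}-Y_n^\pi|^2\,h\;+\;\smallsum_{n=0}^{N-1}\smallsum_{k=n}^{N-1}\E|Z_{t_n,t_k}-Z_{n,k}^\pi|^2\,h^2.
\end{equation*}

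Second, for each $n$ I would introduce the auxiliary BSDE on $[t_n,T]$,
\begin{equation*}
U^n_s=g(t_n,X_T)+\int_s^T f(t_n,r,X_r,Y_r,Z_{t_n,r})\,\d r-\int_s^T Z_{t_n,r}^\top\,\d W_r,
\end{equation*}
which satisfies $U^n_{t_n}=Y_{t_n}$, and its discrete forward flow
\begin{equation*}
U^{n,\pi}_k:=Y_n^\pi-\smallsum_{j=n}^{k-1}f(t_n,t_j,X_j^\pi,Y_j^\pi,Z_{n,j}^\pi)\,h+\smallsum_{j=n}^{k-1}(Z_{n,j}^\pi)^\top\Delta W_j,
\end{equation*}
so that $U^{n,\pi}_n=Y_n^\pi$ and $U^{n,\pi}_N=\mathscr{Y}_N^\pi(n)$. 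The terminal discrepancy is controlled by the Euler error plus the a posteriori residual: $\E|U^n_T-U^{n,\pi}_N|^2\leq 2K_1^2 C_x h+2R_n^\pi$, where $R_n^\pi:=\E|\mathscr{Y}_N^\pi(n)-g(t_n,X_N^\pi)|^2$. A standard forward-in-$k$ energy estimate---expanding $\E|U^n_{t_{k+1}}-U^{n,\pi}_{k+1}|^2$, applying the It\^o isometry to the Brownian increments, using Young's inequality with a small parameter to absorb the Lipschitz term in $Z$, and replacing $Z_{t_n,t_k}$ by its $L^2$-projection $\bar Z^{n}_{k}$ on $[t_k,t_{k+1}]$ at cumulative cost $O(h)$ by $Z$-regularity---yields, after a discrete Gr\"onwall step in $k$,
\begin{equation*}
e_n+\smallsum_{k=n}^{N-1}\E|Z_{t_n,t_k}-Z_{n,k}^\pi|^2\,h\;\leq\;C\Big(h+R_n^\pi+h\smallsum_{k=n}^{N-1}e_k\Big),\qquad e_n:=\E|Y_{t_n}-Y_n^\pi|^2.
\end{equation*}

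Third, I close this family of inequalities by a second discrete Gr\"onwall, this time indexed by the evaluation time $n$. Setting $S_n:=h\sum_{k=n}^{N-1}e_k$ and noting $S_n-S_{n+1}=h\,e_n$, the bound on $e_n$ gives $S_n(1-Ch)\leq S_{n+1}+Ch(h+R_n^\pi)$, which for sufficiently small $h$ iterates from $S_N=0$ (using $Nh=T$) to $S_n\leq C'\bigl(h+h\sum_k R_k^\pi\bigr)$; re-substituting yields $e_n\leq C''\bigl(h+R_n^\pi+h\sum_k R_k^\pi\bigr)$. Multiplying by $h$ and summing in $n$ delivers the $Y$-part of the target bound, and plugging the same estimate back into the inner energy inequality and summing over $n$ against $h$ delivers the $(t,s)$-double-integral bound for the $Z$-errors. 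Combined with step one this proves the theorem. The main technical obstacle I expect is precisely the coupling that forces this second Gr\"onwall: the BSDE-at-$n$ carries the BSVIE solution $Y_s$ (or the control $Z_{t_n,s}$) inside its driver, so the estimate at a given $n$ cannot be closed in isolation. The hypothesis that $f$ depends on $y$ \emph{or} on $z$ but not on both is what keeps this cross-coupling one-dimensional---with both dependencies active one would instead obtain a recursion for $e_n$ mixed with a double sum of $Z$-errors, and the coupled iteration would no longer close under a single $O(h)$-smallness condition.
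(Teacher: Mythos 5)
Your proposal is correct in outline but follows a genuinely different decomposition from the paper. The paper splits the error into three pieces: (i) the distance between the BSVIE and an auxiliary family of $N$ diagonally coupled BSDEs \eqref{eq:BSDE_family}, (ii) the distance between that family and its explicit backward Euler scheme \eqref{eq:BSDE_family_disc} --- both imported wholesale from \cite{wang2016numerical} --- and (iii) a purely discrete stability estimate (Lemma~\ref{lemma:main_res}) comparing two solutions of the generic recursion \eqref{eq:BSDE_family_disc_generic}, of which the Euler scheme and the BSVIE scheme are both instances; the a posteriori residual enters only through the terminal mismatch in (iii). You instead skip the intermediate discretized family and run a single forward energy estimate per evaluation index $n$, comparing the continuous auxiliary BSDE (with the exact BSVIE solution in its driver) directly to the scheme's roll-out $U^{n,\pi}_k$, so that the time-discretization error and the a posteriori stability are handled in one pass; both routes then close the diagonal coupling $e_k=\E|Y_{t_k}-Y^\pi_k|^2$ with a second Gr\"onwall in $n$, and your bookkeeping there ($S_n(1-Ch)\le S_{n+1}+Ch(h+R_n^\pi)$) is sound. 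Two caveats. First, your appeal to pointwise H\"older-$1/2$ regularity of $Z_{t,s}$ in $s$ is stronger than what you need or than what is cleanly available; the correct ingredient is the $L^2$-regularity bound $\sum_k\int_{t_k}^{t_{k+1}}\E|Z_{t_n,s}-\bar Z^n_k|^2\,\d s\le Ch$, which is precisely the content of the Wang lemmas the paper cites, so your argument ultimately rests on the same external inputs it appears to avoid. Second, your diagnosis that the hypothesis ``$f$ depends on $y$ or on $z$ but not both'' is what keeps your cross-coupling one-dimensional is not quite the right attribution: in your setup the $z$-dependence is self-referential and absorbed by Young's inequality against the It\^o isometry term while only the diagonal $y$-dependence feeds the $e_k$ sum, so both dependencies together would still close; in the paper the hypothesis is inherited from the convergence results of \cite{wang2016numerical} for the discretized BSDE family (Theorem~\ref{thm:BSDE_disc_conv}), and you would need to verify that the regularity estimates you invoke hold without it.
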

\noindent
To prove Theorem~\ref{thm:main_result}, we first present several intermediate results which are used in the final argument. The overall proof strategy is as follows:
\begin{enumerate}
    \item \textbf{Approximation by coupled BSDEs.}
    We introduce a family of $N$ coupled BSDEs that approximates the BSVIE~\eqref{eq:BSVIE_reduced} in the limit as $h \to 0$ (equivalently $N \to \infty$).

    \item \textbf{Discretization of the BSDE family.}
    We present an explicit backward Euler--Maruyama scheme for these BSDEs which converges to the solution of the BSDE family.

    \item \textbf{Stability estimate.}
    We derive a stability result that compares (i) the BSVIE scheme \eqref{eq:BSVIE_disc} and (ii) the discretized BSDE family \eqref{eq:BSDE_family_disc}. 
    This gives a precise bound on the difference between the two schemes.

    \item \textbf{Conclusion of the proof.}
    Finally, we combine the convergence of the BSDE scheme and the stability estimate to conclude that our BSVIE scheme converges, thereby establishing the error bounds in Theorem~\ref{thm:main_result}.
\end{enumerate}
We note that Steps~1--2 in our outline 
are established directly via the main results of \cite{wang2016numerical}.  
Meanwhile, Steps~3--4, are in spirit similar to the a posteriori error bounds for coupled FBSDE presented in \cite{han2020convergence}.  

In the remainder of this section, we introduce the coupled BSDEs in \eqref{eq:BSDE_family}, recall key results from \cite{wang2016numerical} regarding their numerical discretization, and prove the necessary stability estimates.  At the end, we assemble all of these ingredients in a final proof of Theorem~\ref{thm:main_result}.

To approximate the BSVIE \eqref{eq:BSVIE_reduced}, 
we introduce a family of $N$ coupled BSDEs, one for each $k=0,\ldots,N-1$.  
In the limit as $N \to \infty$ (equivalently $h \to 0$), 
these BSDEs collectively recover the BSVIE solution.  
Concretely, for $0\leq k\leq N-1$ and $t\in[t_k,T]$, we set
\begin{equation}\label{eq:BSDE_family}
    Y^k_t=g(t_k,X_T) + \int_t^Tf(t_k,s,X_s,Y_s^{\pi(s)},Z_s^k)\d s -\int_t^TZ_s^k\d W_s.
\end{equation}
Note that this defines a system of $N$ coupled BSDEs, where $Y^k$ is defined for $t\in[t_k,T]$. For each $n\in \{k+1,\ldots,N-1\}$, the driver of the $k$:th BSDE takes $Y^n$ and $Z^k$ as inputs in $f$ on the interval $[t_n,t_{n+1})$. The following Theorem states that for each $k$ \eqref{eq:BSDE_family} admits a unique adapted solution, and converges to \eqref{eq:BSVIE_reduced} as the size of the temporal steps goes to zero.
\begin{theorem}\label{thm:wang_etc_2}
    Under assumptions~\ref{ass:1}-\ref{ass:2}, it holds that:\begin{enumerate}
        \item The family of BSDEs~\eqref{eq:BSDE_family} admits a unique adapted solution $(Y_s^{\pi(t)},Z_s^{\pi(t)})_{(t,s)\in\Delta[0,T]^2}$.
\item For each $k\in\{0,1,\ldots,N-1\}$, there exists a constant $C_1$, depending on $T$ and $K_1$, such that \begin{equation*}
        \int_0^T\E\bigl|Y_t-Y_t^{\pi(t)}\bigr|^2\emph{d} t + \int_0^T\int_t^T\E\bigl|Z_{t,s}-Z_s^{\pi(t)}\bigr|^2\emph{d} s\emph{d} t\leq C_1h.
    \end{equation*}
    \end{enumerate} 
    \begin{proof}
    This Theorem states the results of Theorem 2.3 and Lemma 4.12 in \cite{wang2016numerical}.
    \end{proof}
\end{theorem}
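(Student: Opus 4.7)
The proof plan follows the four-step outline above. By the triangle inequality, for each $t \in [0,T]$,
\[
Y_t - Y_{\pi(t)}^\pi = (Y_t - Y_t^{\pi(t)}) + (Y_t^{\pi(t)} - \widetilde Y_{\pi(t)}^{\pi, \pi(t)}) + (\widetilde Y_{\pi(t)}^{\pi, \pi(t)} - Y_{\pi(t)}^\pi),
\]
where $(\widetilde Y^{\pi, k}_n, \widetilde Z^{\pi, k}_n)_{n \geq k}$ denotes the explicit backward Euler discretisation of the $k$-th BSDE in the family \eqref{eq:BSDE_family}; an analogous decomposition holds for the $Z$-error. Squaring, using $(a+b+c)^2 \le 3(a^2+b^2+c^2)$ and integrating in $t$ (and $s$) reduces the theorem to bounding the three contributions separately.

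The first contribution is controlled by $C_1 h$ via Theorem~\ref{thm:wang_etc_2}. The second is the standard convergence of the explicit backward Euler scheme for the coupled BSDE family; under Assumptions~\ref{ass:1}-\ref{ass:2} the analysis in \cite{wang2016numerical} yields an $O(h)$ bound uniform in $k$, so integrating in $t$ preserves the rate. Both therefore feed into the $h$-term on the right-hand side of the theorem.

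The third contribution carries the a posteriori residual. The key observation is that, for each fixed $n$, the BSVIE scheme~\eqref{eq:BSVIE_disc} can be rewritten as a backward recursion on $\ell \in \{N, N-1, \ldots, n\}$ starting from $\mathscr{Y}_N^\pi(n)$ and terminating at $Y_n^\pi$, whose one-step update has exactly the same form as the backward Euler step for the $n$-th BSDE of the family, differing only through (a) the terminal value $\mathscr{Y}_N^\pi(n)$ instead of $g(t_n, X_N^\pi)$, and (b) the driver evaluating $Y$ at the BSVIE diagonal $Y_\ell^\pi$ rather than the BSDE-family diagonal $\widetilde Y_\ell^{\pi, \ell}$. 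Taking the difference of the two recursions, conditioning on $\F_{t_\ell}$, applying the discrete Itô isometry, using the $K_1$-Lipschitz continuity of $f$ in $(y, z)$, and invoking Young's inequality to absorb a $\tfrac12 \E|\delta Z|^2 h$ term on the left yields the per-$n$ stability estimate
\begin{equation*}
\E|Y_n^\pi - \widetilde Y_n^{\pi, n}|^2 + \sum_{\ell = n}^{N-1}\E|Z_{n,\ell}^\pi - \widetilde Z_\ell^{\pi, n}|^2 h \le C\,\E|\mathscr{Y}_N^\pi(n) - g(t_n, X_N^\pi)|^2 + C \sum_{\ell = n}^{N-1} \E|Y_\ell^\pi - \widetilde Y_\ell^{\pi, \ell}|^2 h.
\end{equation*}
Multiplying by $h$, summing over $n$, and running a backward discrete Grönwall (valid once $h$ is small enough for $Ch < 1/2$, matching the ``sufficiently small $h$'' hypothesis) closes the estimate with a constant of the form $Ce^{CT}$ depending only on $T$ and $K_1$, and simultaneously controls the $Z$ double sum that matches the left-hand side of the theorem.

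The main obstacle is the cross-coupling through the diagonal: the driver of the $n$-th equation evaluates $Y$ (or $Z$) at quantities shared by every equation of index $\leq \ell$. Under the stated restriction $f = f(t, s, x, y)$ or $f = f(t, s, x, z)$ only one of the two couplings survives; in the first case only the diagonal $Y$-term remains on the right-hand side of the stability estimate, while in the second only a $Z_{n,\ell}$-difference appears within each $n$, which Young's inequality absorbs into the left-hand side. Allowing $f$ to depend on both $y$ and $z$ simultaneously would introduce a second cross-coupling through $Z_\ell^\ell$ (whose family analogue carries only the in-situ $Z$-regularity), producing a doubly coupled Grönwall system on the $Y$- and $Z$-error sums that does not obviously close with a constant independent of the step size; this is why the structural restriction is imposed in the hypothesis.
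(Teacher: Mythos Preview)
Your proposal does not prove the stated theorem. Theorem~\ref{thm:wang_etc_2} asserts (i) well-posedness of the coupled BSDE family~\eqref{eq:BSDE_family} and (ii) the approximation bound $\int_0^T\E|Y_t-Y_t^{\pi(t)}|^2\,\d t + \int_0^T\int_t^T\E|Z_{t,s}-Z_s^{\pi(t)}|^2\,\d s\,\d t\leq C_1h$, i.e.\ the error between the \emph{continuous} BSVIE solution $(Y,Z)$ and the \emph{continuous} BSDE-family solution $(Y^{\pi(t)},Z^{\pi(t)})$. No discrete scheme, no a~posteriori residual, and no function approximators $\Y,\Z$ appear in this statement.

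What you have written is instead a proof sketch for Theorem~\ref{thm:main_result}: you decompose $Y_t-Y_{\pi(t)}^\pi$ (the error between the BSVIE and the \emph{numerical} scheme~\eqref{eq:BSVIE_disc}), invoke the backward Euler discretization, run a stability argument against the terminal residual $\mathscr{Y}_N^\pi(n)-g(t_n,X_N^\pi)$, and even cite Theorem~\ref{thm:wang_etc_2} as an input (``The first contribution is controlled by $C_1 h$ via Theorem~\ref{thm:wang_etc_2}''). A proof cannot use as an ingredient the very statement it is meant to establish.

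The paper's own proof of Theorem~\ref{thm:wang_etc_2} is a direct citation: both parts are results already established in \cite{wang2016numerical} (Theorem~2.3 and Lemma~4.12 there), so no new argument is supplied. If you intended to reprove those results, the relevant machinery is the standard BSDE a~priori estimate applied backward in $k$ to the system~\eqref{eq:BSDE_family}, together with the $\tfrac12$-H\"older continuity of $f$ and $g$ in the $t$-variable from Assumption~\ref{ass:1}; nothing involving $\mathscr{Y}_N^\pi$, $\Y$, $\Z$, or the scheme~\eqref{eq:BSVIE_disc} is relevant.
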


The scheme below, proposed in \cite{wang2016numerical}, is an explicit backward type Euler--Maruyama scheme for the family of BSDEs~\eqref{eq:BSDE_family}.
\begin{equation}\label{eq:BSDE_family_disc}\begin{cases}
        \displaystyle
Y_n^{k,\pi}=\E\big[Y_{n+1}^{k,\pi}\,\big|\,\F_{t_n}\big] + f(t_k,t_n,X_n^\pi,Y_n^{n,\pi},Z_n^{k,\pi})h,\\[6pt]
    Z^{k,\pi}_n=\frac{1}{h}\E\big[\Delta W_n Y_{n+1}^{k,\pi}\,\big|\,\F_{t_n}\big],\\[6pt]
    Y_N^{k,\pi}=g(t_k,X_N^\pi), \quad  (n,k)\in\Delta^\pi[0,T]^2.
    \end{cases}
\end{equation}

The following theorem, which combines results from \cite{wang2016numerical}, states that the scheme~\eqref{eq:BSDE_family_disc} converges 
to the solution of the BSDE family \eqref{eq:BSDE_family}.
\begin{theorem}\label{thm:BSDE_disc_conv}
Let Assumption~\ref{ass:1}-\ref{ass:2} hold true and suppose that $f=f(t,s,x,y)$ or $f=f(t,s,x,z)$. Then, for each $k\in\{0,1,\ldots,N-1\}$, there exists a constant $C_2$, depending on $T$ and $K_1$, such that \begin{equation*}
    \int_{t_k}^{t_{k+1}}\E\big|Y_t^k-Y_{\pi(t)}^{k,\pi}\big|^2\emph{d} t+ h\int_{t_k}^T\E\big|Z_t^k-Z_{\pi(t)}^{k,\pi}\big|^2\emph{d} t\leq C_2h.
    \end{equation*}
    \begin{proof}
    This is a direct application of \cite[Lemma 4.5 and Lemma 4.12]{wang2016numerical}.
    \end{proof}
\end{theorem}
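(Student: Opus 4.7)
The plan is to adapt the classical $L^2$-convergence analysis of explicit backward Euler schemes for BSDEs, in the spirit of Zhang and Bouchard--Touzi, to the coupled family \eqref{eq:BSDE_family}. For fixed $k$, the $k$:th equation is a BSDE on $[t_k,T]$ whose driver $f(t_k,s,X_s,Y^{\pi(s)}_s,Z^k_s)$ couples to the diagonal processes $Y^n_s$ for $n\ge k$. The target estimate combines a local $Y$-bound on one subinterval with a $Z$-bound summed over all future subintervals, consistent with the usual strong $L^2$-rate $O(h^{1/2})$.

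I would carry out four steps. Step 1 (path regularity). For each $k$, establish
\[
\sup_{s\in[t_n,t_{n+1}]}\E|Y^k_s-Y^k_{t_n}|^2\le Ch,\qquad \sum_{n=k}^{N-1}\int_{t_n}^{t_{n+1}}\E\bigl|Z^k_s-\bar Z^k_n\bigr|^2\,\d s\le Ch,
\]
with $\bar Z^k_n=\tfrac{1}{h}\E\bigl[\int_{t_n}^{t_{n+1}}Z^k_s\,\d s\,\big|\,\F_{t_n}\bigr]$; under Assumption~\ref{ass:1} the $Z$-regularity follows from a Malliavin representation $Z^k_s=D_sY^k_s$ combined with standard Lipschitz and Malliavin regularity of $X$. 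Step 2 (one-step expansion). Integrating the continuous BSDE on $[t_n,t_{n+1}]$, conditioning on $\F_{t_n}$, and invoking Brownian martingale representation produces expansions of $Y^k_{t_n}$ and $\bar Z^k_n$ that mirror the discrete scheme \eqref{eq:BSDE_family_disc}; subtracting yields recursions for the errors $e^k_n:=\E|Y^k_{t_n}-Y^{k,\pi}_n|^2$ and $\eta^k_n:=\E|\bar Z^k_n-Z^{k,\pi}_n|^2$. Step 3 (discrete Gronwall). Using Lipschitz continuity of $f$ and $g$, Young's inequality, Step~1, and the Euler--Maruyama FSDE bound, derive
\[
e^k_n+\tfrac{1}{2}h\,\eta^k_n\le (1+Ch)\,e^k_{n+1}+Ch\,e^n_n+Ch^2+Ch\,\varepsilon_n,
\]
where $\sum_n\varepsilon_n=O(1)$ collects path-regularity residuals; a backward discrete Gronwall then yields $\max_n e^k_n+h\sum_{n\ge k}\eta^k_n=O(h)$. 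Step 4 (conclusion). Translate these grid bounds to the continuous integrals appearing in the theorem by invoking Step~1 once more on each subinterval.

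The main obstacle is the diagonal coupling through $Y^{\pi(s)}_s$, which distinguishes this family from a single BSDE: the $k$:th recursion carries an inhomogeneous forcing proportional to $e^n_n$ for each $n\ge k$, with $e^n_n$ in turn governed by the $n$:th BSDE. I would close the system by specializing the Step~3 recursion to the diagonal $k=n$, where the forcing $e^n_n$ coincides with the unknown and a self-contained backward Gronwall delivers $\max_n e^n_n=O(h)$; this bound then feeds the general $k$:th recursion as an $O(h^2)$ additive term. The hypothesis that $f$ depends on $y$ or $z$ but not both is crucial here: it prevents a Young's-inequality feedback loop between $Y$- and $Z$-errors at the same step and keeps the one-step contraction factor $(1+Ch)$ independent of $h$, so that the backward Gronwall closes without any smallness condition beyond $h\le T$.
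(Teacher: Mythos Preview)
The paper does not actually prove this statement: its entire argument is the one-line citation of Lemmas~4.5 and~4.12 in Wang~(2016), treating the convergence of the scheme~\eqref{eq:BSDE_family_disc} as a black box inherited from that reference. Your proposal, by contrast, reconstructs the content behind that citation---the Zhang/Bouchard--Touzi $L^2$ machinery (path regularity via Malliavin representation, one-step expansion, discrete backward Gr\"onwall) adapted to the coupled family, with the diagonal errors $e^n_n$ isolated first and then fed back as an $O(h^2)$ source term into the off-diagonal recursion for $e^k_n$. This is essentially the route Wang~(2016) takes, so your sketch is correct and consistent with the cited source; you are simply supplying what the paper outsources. One small remark: your Step~3 in fact produces $\max_n e^k_n + h\sum_{n\ge k}\eta^k_n = O(h)$, which after Step~4 gives an $O(h^2)$ bound for each fixed $k$---stronger than the $O(h)$ stated here, and indeed what is implicitly needed when the estimate is summed over $k$ in the proof of Theorem~\ref{thm:main_result}.
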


\begin{remark}
The scheme in \cite{wang2016numerical} uses $Y_{n+1}^{n,\pi}$ rather than $Y_n^{n,\pi}$ in the driver. This makes the scheme implicit in the equation for $\widebar{Z}_n^{k\pi}$ since $f(t_k,t_n,X_n^\pi,Y_{n+1}^{n,\pi},Z_n^{k,\pi})$ is $\F_{t_{n+1}}$ measurable (rather than $\F_{t_n}$-measurable) which yields $Z^{k,\pi}_n=\frac{1}{h}\E\big[\Delta W_n Y_{n+1}^{k,\pi}+ f(t_k,t_n,X_n^\pi,Y_{n+1}^{n,\pi},Z_n^{k,\pi})\Delta W_n\,\big|\,\F_{t_n}\big]$. Nevertheless, the same techniques apply, and thus one can prove the same error bound 
for the explicit scheme considered here. Alternatively, one can use the more general scheme for type-II BSVIEs proposed in~\cite{hamaguchi2023approximations}, which when applied to type-I BSVIEs coincides with \eqref{eq:BSDE_family_disc}.
\end{remark}

To bound the difference between our BSVIE scheme~\eqref{eq:BSVIE_disc} and the scheme for the BSDE family~\eqref{eq:BSDE_family_disc}, we introduce the following general scheme
\begin{equation}\label{eq:BSDE_family_disc_generic}
        \displaystyle
    \widehat{Y}_{n+1}^{k,\pi}=\widehat{Y}_n^{k,\pi} - f(t_k,t_n,X_n^\pi,\widehat{Y}_n^{n,\pi},\widehat{Z}_n^{k,\pi})h+(\widehat{Z}_n^{k,\pi})^\top\Delta W_n,\quad (k,n)\in\Delta^\pi[0,T]^2.
\end{equation}

Because no initial or terminal condition is imposed, the general scheme above admits infinitely many solutions. Furthermore, we observe that 
$\widehat{Y}_n^{k,\pi}=\E\big[\widehat{Y}_{n+1}^{k,\pi}\,\big|\,\F_{t_n}\big] + f(t_k,t_n,X_n^\pi,\widehat{Y}_n^{n,\pi},\widehat{Z}_n^{k,\pi})h$, and 
$\widehat{Z}^{k,\pi}_n=\frac{1}{h}\E\big[\Delta W_n \widehat{Y}_{n+1}^{k,\pi}\,\big|\,\F_{t_n}\big]$, implying that \eqref{eq:BSDE_family_disc} is a special case of \eqref{eq:BSDE_family_disc_generic}. It is, in fact, also possible to express our BSVIE scheme via the generic scheme~\eqref{eq:BSDE_family_disc_generic}. To illustrate this, we introduce the following notation
\begin{equation}\label{eq:BSVIE_BSDE_disc}
\begin{cases}
    \displaystyle
\mathscr{Y}_k^\pi(n+1)=\mathscr{Y}_k^\pi(n) - f(t_k,t_n,X_n^\pi,\mathscr{Y}_n^\pi(n),\mathscr{Z}_n^\pi(k))h+(\mathscr{Z}_k^\pi(n))^\top\Delta W_n,
    \\[6pt]
    \mathscr{Y}_n^\pi(n)=\Y(t_n,X_n^\pi),\quad \mathscr{Z}_k^\pi(n)=\Z(t_k,t_n,X_n^\pi),\quad (n,k)\in\Delta^\pi[0,T]^2.
\end{cases}
\end{equation}
Note that the above is equivalent to~\eqref{eq:BSVIE_disc}, with $\mathscr{Y}_k^\pi(k)=Y_k^\pi$. We emphasize that the scheme presented above is included purely for illustration and is not intended for practical use in this form. 

The following lemma provides a stability estimate between two solutions to~\eqref{eq:BSDE_family_disc_generic}.
\begin{lemma}\label{lemma:main_res}
  Let Assumptions~\ref{ass:1}-\ref{ass:2} hold true and assume that $h$ is small enough so that  $(2K_1 + \tfrac{1}{2})\,h<1$. 
For $j \in \{1,2\}$, suppose 
  $\bigl\{\widehat{Y}_n^{k,\pi,j}, \widehat{Z}_n^{k,\pi,j}\bigr\}_{(k,n)\in \Delta^\pi[0,T]^2}$ 
  are two square integrable solutions to \eqref{eq:BSDE_family_disc_generic}. 
  Define the differences
  \begin{equation*}
    \delta Y_n^k \;=\; \widehat{Y}_n^{k,\pi,1} \;-\; \widehat{Y}_n^{k,\pi,2},
    \qquad
    \delta Z_n^k \;=\; \widehat{Z}_n^{k,\pi,1} \;-\; \widehat{Z}_n^{k,\pi,2}.
  \end{equation*}
  Then there exist constants $C_Y$ and $C_Z$, depending only on $T$ and $K_1$, such that 
  for every $(k,n)\in \Delta^\pi[0,T]^2$, the following estimates hold:
  \begin{align*}
    \E\bigl|\delta Y_n^k\bigr|^2 
    &\;\;\le\;\; 
      C_Y \Bigl(\,
        \E\bigl|\delta Y_N^k\bigr|^2 
        \;+\; 
        \sum_{\ell=n}^{N-1}\E\bigl|\delta Y_N^\ell\bigr|^2\,h
      \Bigr),
    \\[6pt]
    \E\bigl|\delta Z_n^k\bigr|^2 \, h
    &\;\;\le\;\; 
      C_Z \Bigl(\,
        \E\bigl|\delta Y_N^k\bigr|^2 
        \;+\; 
        \sum_{\ell=n}^{N-1}\E\bigl|\delta Y_N^\ell\bigr|^2\,h
      \Bigr).
  \end{align*}
    \begin{proof}
We first derive a discrete Grönwall inequality for the error terms
\(\mathbb E|\delta Y_n^k|^2\) and \(\mathbb E|\delta Z_n^k|^2\).
Inspecting the recursion shows that, once the time grid is fine
enough, i.e., for \(N \ge (2K_1 +\frac{1}{2})T\), where
\(K_1\) is the Lipschitz-growth constant in Assumption \ref{ass:1} and
\(T\) is the time horizon, the associated
constants are monotone decreasing in \(N\) and therefore uniformly
bounded.  Because the \(Z\)-process carries two time indices, we treat
the cases \(n>k\) and \(n=k\) separately before combining the
estimates.  Full algebraic details are given in
Appendix~\ref{app:proof_41}. \qedhere
    \end{proof}
\end{lemma}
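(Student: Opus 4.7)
My plan is to extract everything from a single one-step inequality. Subtracting the two generic schemes gives
\[
    \delta Y_{n+1}^k = \delta Y_n^k - \Delta f_n^k\, h + (\delta Z_n^k)^\top \Delta W_n,
\]
with $\Delta f_n^k$ the driver increment; taking the $\F_{t_n}$-conditional expectation yields the identity $\delta Y_n^k = \E[\delta Y_{n+1}^k\mid \F_{t_n}] + h\,\Delta f_n^k$ together with the orthogonality $\E[|\delta Y_{n+1}^k|^2\mid \F_{t_n}] = |\E[\delta Y_{n+1}^k\mid \F_{t_n}]|^2 + h|\delta Z_n^k|^2$. Squaring the identity, applying Young's inequality with parameter $\gamma h$, and using the Lipschitz continuity of $f$ from Assumption~\ref{ass:1} produces the basic estimate
\[
\E|\delta Y_n^k|^2 + (1+\gamma h)h\,\E|\delta Z_n^k|^2 \leq (1+\gamma h)\,\E|\delta Y_{n+1}^k|^2 + K_1^2\, h\bigl(h + \tfrac{1}{\gamma}\bigr)\, s_n,
\]
where $s_n = \E|\delta Y_n^n|^2$ in the case $f=f(t,s,x,y)$ and $s_n = \E|\delta Z_n^k|^2$ in the case $f=f(t,s,x,z)$.

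In the $z$-only case, the $Z$-source is absorbed into the LHS for $\gamma$ large enough, yielding a clean recursion $\E|\delta Y_n^k|^2 \leq (1+Ch)\,\E|\delta Y_{n+1}^k|^2$ that iterates backward to a bound by $e^{CT}\E|\delta Y_N^k|^2$; the stated bound then follows by adding the non-negative sum. The $y$-only case is the structurally harder one and I would split it according to whether $n=k$ or $n>k$. On the diagonal $n=k$ the source term equals $\E|\delta Y_k^k|^2$ itself; the smallness condition $(2K_1+\tfrac{1}{2})h<1$, combined with a well chosen $\gamma$ of order $K_1$, keeps $1 - K_1^2 h(h+\tfrac{1}{\gamma})$ bounded away from zero, so this self-dependence can be moved to the LHS and inverted to give a closed one-step recursion for the diagonal. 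For $n>k$ the inequality retains $\E|\delta Y_n^n|^2$ as an external source.

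Iterating backward from $N$ down to $k$ and combining the diagonal step at $n=k$ with the off-diagonal steps $\E|\delta Y_n^k|^2 \leq (1+\gamma h)\E|\delta Y_{n+1}^k|^2 + C_1 h\,\E|\delta Y_n^n|^2$ for $n>k$, I would obtain an inequality of the form
\[
    \E|\delta Y_k^k|^2 \leq C_3\,\E|\delta Y_N^k|^2 + C_4\, h \sum_{m=k+1}^{N-1}\E|\delta Y_m^m|^2.
\]
Setting $b_k \defeq \E|\delta Y_k^k|^2$, this is a discrete backward Grönwall inequality whose standard solution gives $b_k \leq C\bigl(\E|\delta Y_N^k|^2 + h\sum_{\ell=k}^{N-1}\E|\delta Y_N^\ell|^2\bigr)$, the diagonal case of the lemma. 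For off-diagonal $n>k$, one more iteration of the off-diagonal recursion and substitution of the diagonal bound into the source, together with the collapse $\sum_{m\ge n}\sum_{\ell \ge m} h^2 \leq T \sum_{\ell \ge n} h$ of the resulting double sum, deliver the full $Y$-bound. Finally, the $Z$-bound comes out of the same basic inequality by isolating $h\E|\delta Z_n^k|^2$ and inserting the $Y$-estimates just proved.

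The main obstacle is the two-time-index coupling: the inductive bound for $\E|\delta Y_n^k|^2$ is driven by the diagonal sequence $\E|\delta Y_n^n|^2$, which lives along a different index and is itself unknown. The resolution is to close the diagonal recursion first, exploiting the self-dependence of the $y$-argument of $f$ at $n=k$, and only then feed that bound back into the off-diagonal inequality. The smallness condition on $h$ is exactly what is needed to keep the discrete Grönwall amplification factor bounded uniformly in $N$, so that the constants $C_Y, C_Z$ end up depending only on $T$ and $K_1$.
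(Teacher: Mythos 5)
Your proposal is correct and follows essentially the same route as the paper's proof in Appendix A: the same one-step difference recursion with the orthogonality/It\^o-isometry decomposition, Young's inequality with a tunable parameter to handle the driver increment, the split between the diagonal case $n=k$ (closed first via a discrete Gr\"onwall argument) and the off-diagonal case $n>k$ (into which the diagonal bound is then substituted), and the $Z$-estimate read off from the same basic inequality. The only cosmetic differences are that you bound $\E|\delta Y_n^k|^2$ from above rather than $\E|\delta Y_{n+1}^k|^2$ from below, and you treat the $y$-only and $z$-only drivers as separate cases where the paper's appendix absorbs both source terms in a single estimate.
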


We are now ready to prove Theorem~\ref{thm:main_result}.
\begin{proof}[Proof of Theorem~\ref{thm:main_result}]
We decompose the overall error into three main contributions:
\begin{enumerate}
    \item[\textit{(i)}] the approximation error arising from replacing the original BSVIE with a family of BSDEs,
    \item[\textit{(ii)}] the discretization error incurred when approximating the BSDE family with a backward Euler--Maruyama scheme,
    \item[\textit{(iii)}] the error due to the difference between the backward discretization scheme for the BSDE family and our BSVIE discretization scheme.
\end{enumerate}
Denote these contributions by $\text{Err}_1(h)$, $\text{Err}_2(h)$, and $\text{Err}_3(h)$, respectively. 
Then we have
\begin{align*}
        &\int_0^T\E|Y_t-Y_{\pi(t)}^\pi|^2\d t + \int_0^T\int_t^T\E|Z_{t,s}-Z_{\pi(t),\pi(s)}^\pi|^2\d s\d t\leq\text{Err}_1(h)+\text{Err}_2(h)+\text{Err}_3(h),
\end{align*}
where
\begin{align*}
            \text{Err}_1(h)&=
        \int_0^T\E|Y_t-Y_t^{\pi(t)}|^2\d t + \int_0^T\int_t^T\E|Z_{t,s}-Z_s^{\pi(t)}|^2\d s\d t,\\
        \text{Err}_2(h)&=\int_0^T\E|Y_t^{\pi(t)}-Y_{\pi(t)}^{\pi(t),\pi}|^2\d t + \int_0^T\int_t^T\E|Z_s^{\pi(t)}-Z_{\pi(t)}^{\pi(s),\pi}|^2\d s\d t,\\
\text{Err}_3(h)&=\int_0^T\E|Y_{\pi(t)}^{\pi(t),\pi}-Y_{\pi(t)}^\pi|^2\d t + \int_0^T\int_t^T\E|Z_{\pi(t)}^{\pi(s),\pi}-Z_{\pi(t),\pi(s)}^\pi|^2\d s\d t.
\end{align*}
From Theorem~\ref{thm:wang_etc_2}, we have $\text{Err}_1(h)\leq C_1h$. A slight re-write of $\text{Err}_2(h)$ and applying Theorem~\ref{thm:BSDE_disc_conv} yield
\begin{align*}
    \text{Err}_2(h)=\sum_{k=0}^{N-1}\Big(\int_{t_k}^{t_k+1}\E|Y_t^{\pi(t)}-Y_{\pi(t)}^{\pi(t),\pi}|^2\d t + h\int_{t_k}^{T}\E|Z_s^{\pi(t)}-Z_{\pi(t)}^{\pi(s),\pi}|^2\d s\Big)\leq C_2h.
\end{align*}
For $\text{Err}_3(h)$, we note that
\begin{align*}
    \text{Err}_3(h)=\sum_{n=0}^{N-1}\Big(\E|Y_n^{n,\pi}-Y_n^\pi|^2h + h\sum_{k=n}^{N-1}\E|Z_n^{k,\pi}-Z_{n,k}^\pi|^2\d s\Big),
\end{align*}
to which we want to apply Lemma~\ref{lemma:main_res}. Define the two discrete processes as follows. First, set $\big\{Y_n^{n,\pi,1},Z_n^{k,\pi,1}\big\}_{(k,n)\in\Delta^\pi[0,T]^2}\allowbreak=\big\{Y_n^{n,\pi},Z_n^{k,\pi}\big\}_{(k,n)\in\Delta^\pi[0,T]^2}$, where $\big\{Y_n^{n,\pi},Z_n^{k,\pi}\big\}$ is produced by the backward Euler--Maruyama scheme~\eqref{eq:BSDE_family_disc}. Next, set $\bigl\{Y_n^{n,\pi,2},Z_n^{k,\pi,2}\bigr\}_{(k,n)\in\Delta^\pi[0,T]^2}=\bigl\{\mathscr{Y}_{n}^\pi(n),\mathscr{Z}_{n}^\pi(k)\bigr\}_{(k,n)\in\Delta^\pi[0,T]^2}$, where $\big\{\mathscr{Y}_{n}^\pi(n),\mathscr{Z}_{n}^\pi(k)\big\}$ is obtained from the discretization scheme for the BSVIE~\eqref{eq:BSVIE_disc}, using the notation in \eqref{eq:BSVIE_BSDE_disc}. Then
\begin{align*}
    \text{Err}_3(h)\leq&C_Y \sum_{n=0}^{N-1}\Bigl(\,
        \E\bigl|\delta Y_N^n\bigr|^2 
        \;+\; 
        \sum_{\ell=n}^{N-1}\E\bigl|\delta Y_N^\ell\bigr|^2\,h
      \Bigr) +   C_Z \sum_{n=0}^{N-1}h\sum_{k=n}^{N-1}\Bigl(\,
        \E\bigl|\delta Y_N^k\bigr|^2 
        \;+\; 
        \sum_{\ell=n}^{N-1}\E\bigl|\delta Y_N^\ell\bigr|^2\,h
      \Bigr) \\
      \leq & (C_Y+TC_Z)(1+T)\sum_{n=0}^{N-1}\E\bigl|\delta Y_N^n\bigr|^2\\
      =&(C_Y+TC_Z)(1+T)\sum_{n=0}^{N-1}\E\bigl|\mathscr{Y}_N^\pi(n)-g(t_n,X_N^\pi)|^2. 
\end{align*}
Combining the estimates for $\text{Err}_1(h)$, $\text{Err}_2(h)$, and $\text{Err}_3(h)$, we obtain the overall error bound, which completes the proof.
\end{proof}
\color{black}
\begin{remark}
    In addition to the discretization error analyzed in Theorem~\ref{thm:main_result}, practical implementations involve other sources of error, such as optimization error, representation error due to finite network capacity, and sampling error from Monte Carlo approximations. Our analysis does not cover these contributions; instead, we focus on establishing the connection between the time discretization and the residual loss. Empirically, we find that the network can reduce the loss sufficiently to observe the predicted convergence order, but a rigorous justification in the spirit of \cite{han2020convergence} remains an important direction for future work.
\end{remark}
\color{black}
\section{Fully implementable scheme and neural network details}\label{sec:fully_NN}
In this section, we present a fully discretized problem formulation and introduce neural networks as function approximators.

\subsection{Fully implementable algorithms}\label{sec:fully_implementable}
Without further specifications, \eqref{eq:var_disc_FBSVIE} assumes exact optimization over an unspecified set of functions $\Y$ and $\Z$ and the exact computation of expectations. To define a fully implementable scheme, we introduce the parametric functions $\Y^\theta\colon[0,T]\times\R^d\to\R$ and $\Z^\theta\colon\Delta[0,T]^2\times\R^d\times\R^d\to\R^\ell$. Here $\Y^{\theta_y}$ and $\Z^{\theta_z}$ are neural networks and $\theta=(\theta_y,\theta_z)$ represents all trainable parameters. We assume that $\theta$ takes values in some parameter space $\Theta$. Moreover, expectations are approximated with batch Monte Carlo simulation. Let $K_{\mathrm{epoch}}\geq1,K_{\mathrm{batch}}\in\N$ be the number of epochs and the number of batches per epoch, respectively. Let further $M_{\mathrm{train}},M_\text{batch}\in\N$ be the size of the training data set and batch, respectively. We assume that $M_{\mathrm{train}}/M_\text{batch}=K_{\mathrm{batch}}\in\N$. \color{black} The training data consist of $M_{\mathrm{train}}$ independent paths of Wiener increments $\{\Delta W_k(m)\}_{k=0}^{N-1}$, $m=1,\ldots,M_{\mathrm{train}}$, which are reshuffled at the start of each epoch and partitioned into $K_{\mathrm{batch}}$ disjoint mini-batches of size $M_{\text{batch}}$.
\color{black} The training is initialized by random sampling of $\theta^0\in\Theta$. \color{black} Concretely, we iterate over epochs $e=1,\ldots,K_{\mathrm{epoch}}$ and, within each epoch, over mini-batches $b=1,\ldots,K_{\mathrm{batch}}$, updating $\theta$ at each $(e,b)$ by one optimizer step on the mini-batch objective \eqref{eq:var_fully_disc_FBSVIE}.
\color{black} For each update step in an epoch of the training algorithm, we take $M_\text{batch}$ Wiener increments $\Delta W_0(m),\ldots,\Delta W_{N-1}(m)$, $m=1,2,\dots,M_\text{batch}$ from the training data set that were not previously used during the epoch and update $\theta$ by approximate optimization of the following problem:

\begin{equation}\label{eq:var_fully_disc_FBSVIE}\begin{dcases}
\underset{\color{black}\theta\in\Theta\color{black}}{\mathrm{minimize}}\ \sum_{n=0}^{N-1}
 \frac{1}{M_\text{batch}}\sum_{m=1}^{M_\text{batch}}\big|\mathscr{Y}_N^{\pi,\theta}(n)(m)-g\big(t_n,X_n^{\pi,\theta}(m),X_N^{\pi,\theta}(m)\big)\big|^2,\\ 
 \text{For}\  m=1,\ldots,M_\text{batch}:\\
X_n^{\pi,\theta}(m)=x_0+\sum_{k=0}^{n-1}b\big(t_k,X_k^{\pi,\theta}(m),Y_k^{\pi,\theta}(m), Z_{k,k}^{\pi,\theta}(m)\big)h,\\
\ \ \ \qquad\qquad+ \sum_{k=0}^{n-1}\sigma\big(t_k,X_k^{\pi,\theta}(m),Y_k^{\pi,\theta}(m)\big)\Delta W_k(m),\\
\mathscr{Y}_N^{\pi,\theta}(n)(m)=Y_n^{\pi,\theta}(m) - \sum_{k=n}^{N-1}f(t_n,t_k,X_k^{\pi,\theta}(m),Y_k^{\pi,\theta}(m),Z_{n,k}^{\pi,\theta}(m),Z_{k,k}^{\pi,\theta}(m))h \\
\qquad\qquad\qquad\ + \sum_{k=n}^{N-1}(Z_{n,k}^{\pi,\theta}(m))^\top\Delta W_k(m),\\
Y_n^{\pi,\theta}(m)=\Y^\theta(t_n,X_n^{\pi,\theta}(m)), \quad Z_{n,k}^{\pi,\theta}(m)=\Z^\theta(t_n,t_k,X_n^{\pi,\theta}(m),X_k^{\pi,\theta}(m)),\\
    \end{dcases}
\end{equation}

When all training data has been used, a new epoch starts. After $K_{\mathrm{epochs}}$ epochs, the algorithm terminates.
The neural network parameters at termination are $\theta^*$. It is an approximation of the parameters $\theta^{**}$ that optimize \eqref{eq:var_fully_disc_FBSVIE} in the limit $M_\text{batch}\to\infty$. \color{black}Hence, the discrepancy between $(Y^{\pi,\theta^{**}},Z^{\pi,\theta^{**}})$ and the solution to
\eqref{eq:var_disc_FBSVIE} is governed by the representation error. If the chosen neural network
classes are dense in the target function spaces, this error can vanish as model capacity grows.
\color{black}

\subsection{Specification of the neural networks}\label{sec:spec_NN}
Here, we introduce the neural networks that we use in our implementations in Section~\ref{sec:fully_implementable}. The generality is kept to a minimum and more general architectures are of course possible. For $\Y^\theta\colon[0,T]\times\R^d\to\R$ and $\Z^\theta\colon\Delta[0,T]^2\times\R^d\times\R^d\to\R^\ell$, we employ fully-connected, feed-forward networks with three hidden layers; because the input dimension of $\Z^\theta$ is larger than that of $\Y^\theta$, we use 100 neurons in each hidden layer for $\Z^\theta$ and 50 neurons in each hidden layer for $\Y^\theta$. In both architectures, each affine transformation in the hidden layers is followed by the element-wise ReLU activation function, $\mathfrak{R}(x)=\max(0,x)$, while the output layer remains unactivated. More precisely, for $x,x_t\in\R^d$ and $(t,s)\in\Delta[0,T]^2$ denote by $\mathbf{x}_y=\text{Concat}(t,x)\in\R^{d+1}$ and $\mathbf{x}_z=\text{Concat}(t,s,x_t,x)\in\R^{2d+2}$
\begin{align*}
    \Y^{\theta_y}(\mathbf{x}_y)
    &=
    \mathcal{W}_y^4\mathfrak{R}(\mathcal{W}_y^3\mathfrak{R}(\mathcal{W}_y^2\mathfrak{R}(\mathcal{W}_y^1\mathbf{x}_y+b_y^1)+b_y^2)+b_y^3)+b_y^4,\\
        \Z^{\theta_y}(\mathbf{x}_z)
    &=
    \mathcal{W}_z^4\mathfrak{R}(\mathcal{W}_z^3\mathfrak{R}(\mathcal{W}_z^2\mathfrak{R}(\mathcal{W}_z^1\mathbf{x}_z+b_z^1)+b_z^2)+b_z^3)+b_z^4,
\end{align*}
with weight matrices 
$\mathcal{W}_y^1\in\R^{50\times (d+1)}$,  $\mathcal{W}_y^2,\mathcal{W}_y^3\in\R^{50\times 50}$, $\mathcal{W}_y^4\in\R^{1\times 50}$, and $\mathcal{W}_z^1\in\R^{100\times (2d+2)}$,  $\mathcal{W}_z^2,\mathcal{W}_z^3\in\R^{100\times 100}$, $\mathcal{W}_z^4\in\R^{\ell\times 100}$
and bias vectors
$b_y^1,b_y^2,b_y^3\in\R^{50}$,
 $b_y^4\in\R$, and $b_z^1,b_z^2,b_z^3\in\R^{100}$,
 $b_z^4\in\R^\ell$.
Finally, denote by $\theta_y=(\mathcal{W}_y^1,\mathcal{W}_y^2,\mathcal{W}_y^3,\mathcal{W}_y^4,b_y^1,b_y^2,b_y^3,b_y^4)$, $\theta_y=(\mathcal{W}_z^1,\mathcal{W}_z^2,\mathcal{W}_z^3,\mathcal{W}_z^4,b_z^1,b_z^2,b_z^3,b_z^4)$, and $\theta=(\theta_y,\theta_z)$ where the matrices are considered vectorized before concatenation.

\section{Numerical experiments}\label{sec:numerical}
This section is divided into two parts.  Section~\ref{sec:num_1} focuses on test problems that satisfy the assumptions of Section~\ref{sec:error_analysis}, whereas Section~\ref{sec:num_2} relaxes those assumptions and investigates coupled FSDE-BSVIEs.

Throughout all experiments we adopt the same hyper-parameter configuration.  The mini-batch size is fixed at $M_{\text{batch}} = 2^{11}$, and the total number of training paths at $M_{\text{train}} = 2^{18}$.  Each path is therefore processed ten times, giving $K_\text{epoch}=10$ training epochs with random shuffling between epochs. The learning rate is initialized at $0.005$ and follows an exponential decay schedule, being multiplied by $\mathrm{e}^{-0.2}$ after every epoch.  Optimization is carried out with the Adam algorithm~\cite{kingma2014adam}. Further implementation details can be found at \url{https://github.com/AlessandroGnoatto/DeepBSVIE}.

Although we report results with a single feed-forward architecture for all problems, during development we experimented with a wide range of hyper-parameters: between $1$ and~$6$ hidden layers, $10$--$300$ neurons per layer, ReLU versus $\tanh$ activations, and both Adam and SGD optimizers. Once the network had sufficient capacity, the solver’s accuracy and empirical convergence rate were essentially insensitive to the exact depth, width, or activation choice.  We therefore settled on a network with three hidden layers and $100$ neurons per layer, oversized for the low-dimensional examples yet still tractable in up to 500 dimensions, which performed on par with both slimmer and deeper alternatives. This configuration is thus reported as the smallest architecture that remained robust across all test cases.

\subsection{Examples where the error analysis apply}\label{sec:num_1}
In this section, we consider two decoupled FSDE–BSVIE systems in which both the driver $f$ and the free term $g$ depend explicitly on the time variable $t$. That is, we study systems of the form
\begin{equation}
\begin{cases}
X_t \;=\; x_0 \;+\; \displaystyle \int_0^t  b(s,X_s)\d s + \int_0^t\sigma(s, X_s)\d W_s ,
\\[6pt]
Y_t 
\;=\;
g(t,X_T) +\displaystyle \int_t^Tf(t,s,X_s,Y_s,Z_{t,s})\d s -\int_t^TZ_{t,s}^\top \d W_s,\quad
(t,s)\in\Delta[0,T]^2.
\end{cases}
\end{equation}
Assuming that the coefficients satisfy the required regularity conditions, our numerical analysis is applicable to this class of systems.

We consider two examples: one in which the FSDE is driven by additive noise, and another in which it is driven by multiplicative noise.
\subsubsection{Example 1A: Additive noise}
Let $d\in\N$, $k\in\R$, $\mu,x_0\in\R^d$ and $\sigma\in\R^{d\times d}$ be constants with $\sigma$ invertible. We consider the following system
\begin{equation}
\begin{cases}
X_t \;=\; x_0 \;+\; \displaystyle  \mu\,t \;+\;  \sigma\,W_t,
\quad \overline{X}_t \;=\; \frac{1}{d}\sum_{i=1}^d X_t^{i},
\\[6pt]
Y_t 
\;=\;
t\,\sin\!\bigl(k\overline{X}_T\bigr)
\;+\;
\displaystyle \int_{t}^T
\Bigl(\,
\tfrac{tk^2}{2d^2}\,\sin\!\bigl(k\overline{X}_s\bigr)\,\|\sigma\|^2
\;-\;
\mu^\top \sigma^{-1} \,Z_{t,s}
\Bigr)\,\d s
\;-\;
\displaystyle \int_{t}^T
Z_{t,s}^\top\,\d W_s,\quad
(t,s)\in\Delta[0,T]^2.
\end{cases}
\end{equation}
A direct calculation shows that the unique solution $\{(Y_t,Z_{t,s})\}_{t\le s\le T}$ is given by the closed-form expressions
\begin{equation}\label{eq:ref_solution_BSVIE_1A}
Y_t \;=\; t \sin(k\overline{X}_t)
\quad\text{and}\quad
Z_{t,s} 
\;=\;\frac{tk}{d}\cos(k\overline{X}_s)\sigma \mathbf{1}_d,
\quad
(t,s)\in\Delta[0,T]^2.
\end{equation}
Here, $\mathbf{1}_d$ is a d-dimensional vector consisting of ones.

Let $k=5$, $d=5$, $T=1$, $x_0=(1,1,1,1,1)^\top$, $\mu=\text{diag}(0.25,0.25,0.25,0.25,0.25)$, and $\sigma=\text{diag}(0.8,\allowbreak 0.9,\allowbreak 1.0,\allowbreak 1.1, \allowbreak 1.2 \allowbreak)$.

Figure~\ref{fig:Y_5d} displays the approximate $Y$-process alongside the analytical reference solution. In the left frame, three representative sample paths are shown, while the right frame presents the sample mean together with the 25th and 75th sample percentiles.

\begin{figure}[htp]
\centering
\begin{tabular}{cc}
\includegraphics[width=80mm]{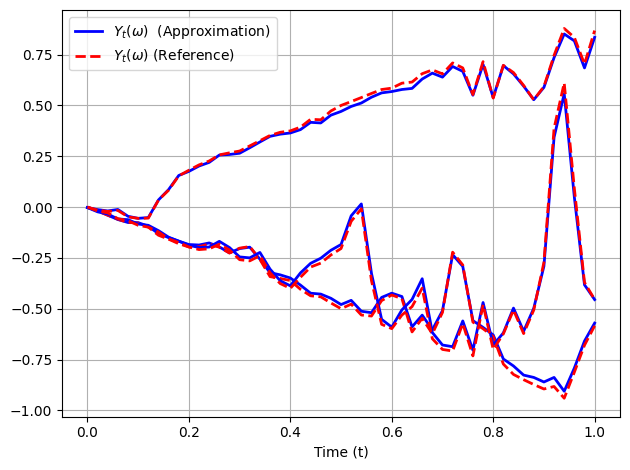}& \includegraphics[width=80mm]{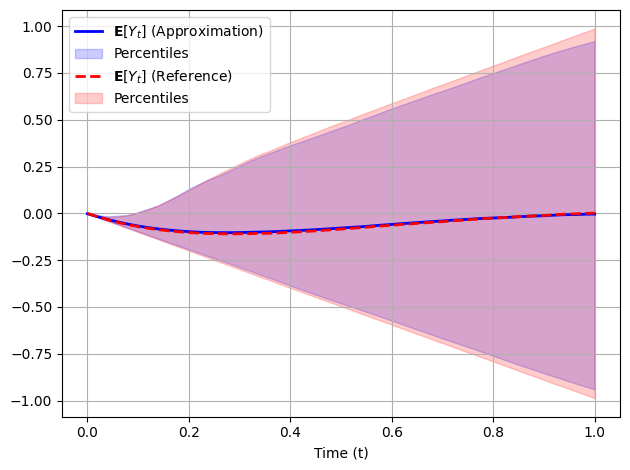}
\end{tabular}
\caption{Comparison of the approximated $Y$ with the reference solutions for Example~1A. \textbf{Left:} Three representative sample paths. \textbf{Right:} The sample mean and the 25th and 75th percentiles.
}\label{fig:Y_5d}
\end{figure}

Figure~\ref{fig:Z_5d} displays the first component of the $Z_{t,\cdot}$-process for different values of $t$ compared with the analytical reference solutions for Example~1A. In the left frame, one representative sample path is shown, while the right frame presents the sample mean.

\begin{figure}[htp]
\centering
\begin{tabular}{c}
\includegraphics[width=160mm]{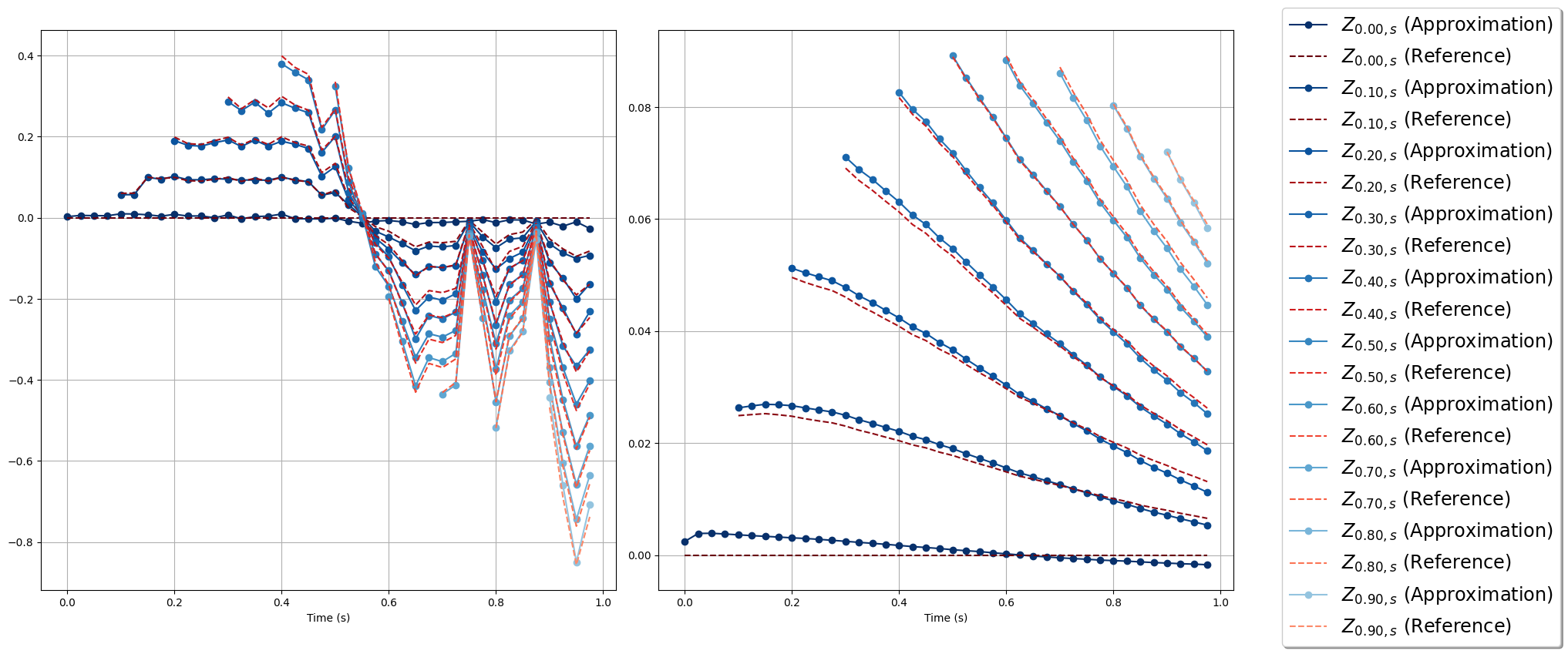}
\end{tabular}
\caption{Comparison of the approximated $Z_{t,s}$ with the reference solution for different values of $t$ for Example~1A. \textbf{Left: } One representative sample path of the first (of 5) component of $Z_{t,s}$. \textbf{Right: } A sample mean for the first component of $Z_{t,s}$.}\label{fig:Z_5d}
\end{figure}

\subsubsection{Example 1B: Multiplicative noise}
Let $d\in\N$, $k\in\R$, $\mu,x_0\in\R^d$ and $\sigma\in\R^{d\times d}$ be constants with $\sigma$ invertible. We consider the following system
\begin{equation}\label{eq:BSVIE_1b}
\begin{cases}
X_t \;=\; x_0+\displaystyle  \int_0^t\text{diag}(\mu) X_s\,\d s \;+\; \int_0^t\text{diag}(\sigma\,X_s) \d W_s,\quad \overline{X}_t=\frac{1}{d}\sum_{i=1}^dX_t^{i},
\\[6pt]
Y_t 
\;=\;
t\,\sin\!\bigl(k\overline{X}_T\bigr)
\;+\;
\displaystyle \int_{t}^T
\bigl(\tfrac{tk^2}{2d^2}\,\sin\!\bigl(k\overline{X}_s\bigr)\,\|\sigma\,X_s\|^2
\;-\;
\mu^\top\sigma^{-1}Z_{t,s}
\bigr)\d s
\;-\;
\displaystyle \int_{t}^T
Z_{t,s}^\top
\,\d W_s,\ 
(t,s)\in\Delta[0,T]^2.
\end{cases}
\end{equation}
A direct calculation shows that the unique solution $\{(Y_t,Z_{t,s})\}_{(t,s)\in\Delta[0,T]^2}$ is given by the closed-form expressions
\begin{equation}\label{eq:ref_solution_BSVIE_1B}
Y_t \;=\; t \sin(k\overline{X}_t)
\quad\text{and}\quad
Z_{t,s} 
\;=\;\frac{tk}{d}\cos(k\overline{X}_s)\sigma X_s,
\quad
(t,s)\in\Delta[0,T]^2.
\end{equation}
Let $d=5$, $T=1$, $x_0=(1,1,1,1,1)^\top$, $\mu=\text{diag}(0.05,0.05,0.05,0.05,0.05)$, and $\sigma=\text{diag}(0.2,\allowbreak 0.25,\allowbreak 0.3,\allowbreak 0.35, \allowbreak 0.45 \allowbreak)$.
Figure~\ref{fig:Y_5d_GBM} displays the approximate $Y$-process alongside the analytical reference solution. In the left frame, three representative sample paths are shown, while the right frame presents the sample mean together with the 5th and 95th sample percentiles. \color{black}Since the variance in Example 1B is smaller than in Example 1A, we use the 5th and 95th percentiles in Example 1B to better illustrate the spread of the distribution, whereas in Example 1A the 25th and 75th percentiles already provide a clear view of the central tendency.\color{black}
\begin{figure}[htp]
\centering
\begin{tabular}{cc}
\includegraphics[width=80mm]{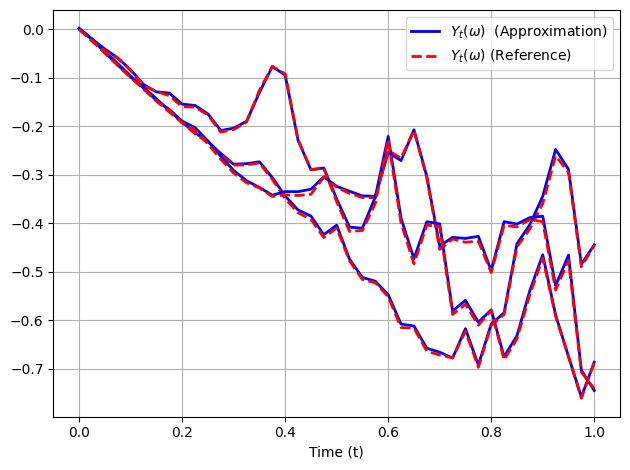}& \includegraphics[width=80mm]{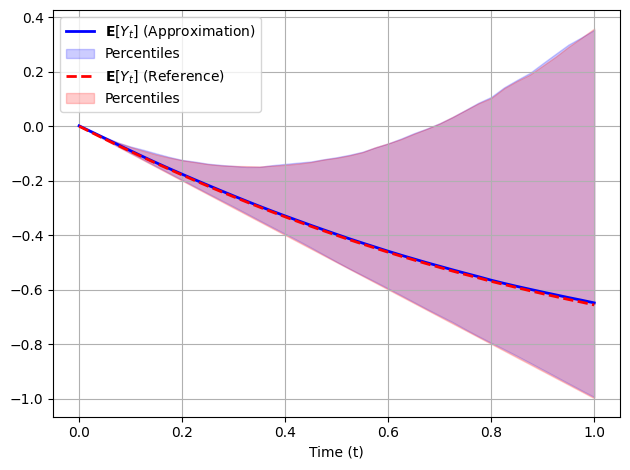}
\end{tabular}
\caption{Comparison of the approximated $Y$ with the reference solutions for Example~1B. \textbf{Left:} Three representative sample paths. \textbf{Right:} The sample mean and the 5th and 95th percentiles.
}\label{fig:Y_5d_GBM}
\end{figure}

Figure~\ref{fig:Z_5d_GBM} displays the first component of the $Z_{t,\cdot}$-process for different values of $t$ compared with the analytical reference solutions. In the left frame, one representative sample path is shown, while the right frame presents the sample mean.

\begin{figure}[htp]
\centering
\begin{tabular}{c}
\includegraphics[width=160mm]{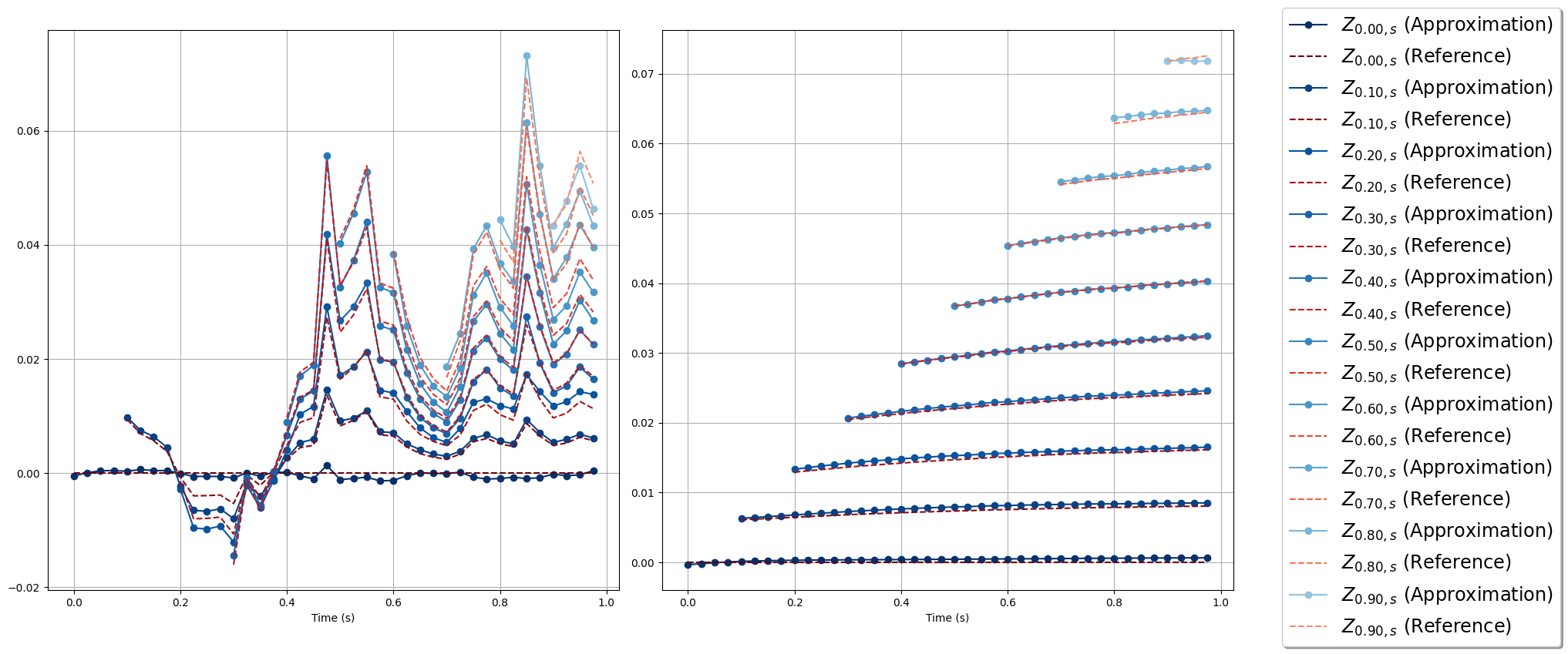}
\end{tabular}
\caption{Comparison of the approximated $Z_{t,s}$ with the reference solution for different values of $t$ for Example~1B. \textbf{Left: } One representative sample path of the first (of 5) component of $Z_{t,s}$. \textbf{Right: } A sample mean the first component of $Z_{t,s}$.}\label{fig:Z_5d_GBM}
\end{figure}

\subsubsection{Example 1: Empirical error analysis}
\color{black}We define the simulation errors and the post-optimization value of the loss function, respectively, by
\begin{align}
\label{eq:sim_errors}
\begin{split}
    \mathrm{Err}^Y(N,M)&\defeq\frac{1}{M}\sum_{m=1}^M\sum_{n=0}^{N-1}\Big|Y_{t_n}(m)-Y_n^{\pi,\theta^*}(m)\Big|^2h,\\
     \mathrm{Err}^Z(N,M)&\defeq\frac{1}{M}\sum_{m=1}^M\sum_{k=0}^{N-1}\sum_{n=k}^{N-1}\Big|Z_{t_k,t_n}(m)-Z_{k,n}^{\pi,\theta^*}(m)\Big|^2h^2,
     \\
     \mathrm{Err}^{\text{T}}(N,M)&\defeq\frac{1}{M}\sum_{m=1}^{M}\sum_{n=0}^{N-1}
 \Big|\mathscr{Y}_N^{\pi,\theta^*}(n)(m)-g\big(t_n,X_n^{\pi,\theta^*}(m),X_N^{\pi,\theta^*}(m)\big)\Big|^2h.
 \end{split}
\end{align}
\color{black} 
Figure~\ref{fig:conv_plot} displays the empirical convergence of our approximation in terms of the stepsize $h$ (or the number of discretization steps $N$). We \color{black}fix \(M=2^{12}\), and \color{black} choose $N \in \{10,20,30,40,50\}$ for Example 1A and $N \in \{10,20,30,40\}$ for Example 1B, where the variance of the solution is lower. 
\begin{figure}[htp]
\centering
\begin{tabular}{cc}
\includegraphics[width=80mm]{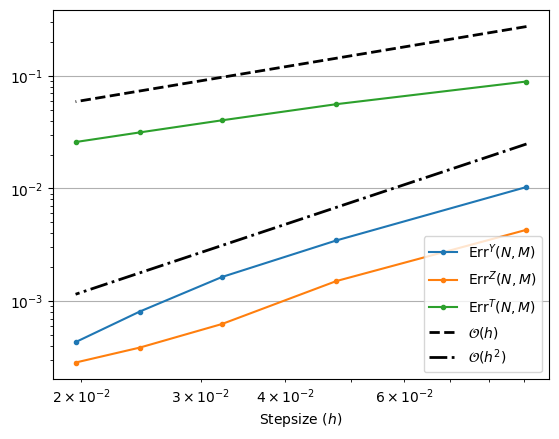}&
\includegraphics[width=80mm]{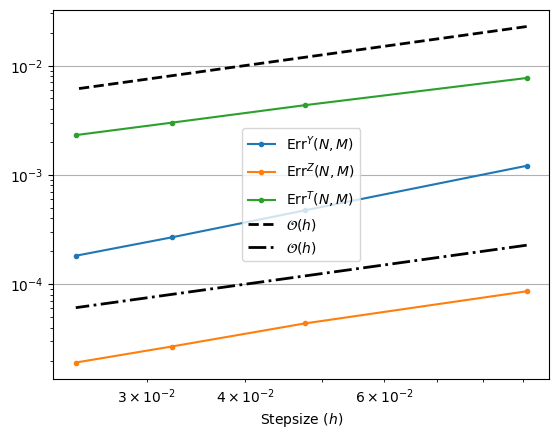}
\end{tabular}
\caption{Empirical convergence plot for our approximate $Y$, $Z$ and the \color{black}post-optimization value of the loss function\color{black}. \textbf{Left:} Example 1A. \textbf{Right:} Example 1B.}\label{fig:conv_plot}
\end{figure}
Note that in Example 1A, where the FSDE is an arithmetic Brownian motion, the Euler--Maruyama scheme coincides with the exact solution. This implies that the entire discretization error is attributable to the discretization of the BSVIE. For Example 1B, the FSDE is a geometric Brownian motion, for which the Euler--Maruyama scheme has a strong discretization error of order $0.5$. Moreover, we have access to a closed-form solution for the geometric Brownian motion, which is used when the reference solution is computed (while the Euler--Maruyama scheme is employed for the FSDE in our approximate solution).

In both Example~1A and Example~1B, we observe an empirical convergence order of 1 for the optimization loss. However, for $Y$ and $Z$, the convergence order is 1 in Example~1A and 0.5 in Example~1B. This suggests that employing a higher-order approximation for the FSDE can improve the observed overall error rate.

\subsubsection{Example 1: CPU time and scalability in the spatial dimension}
In this subsection, we study how the method scales with the spatial dimension $d$ in terms of both accuracy and wall-clock runtime. As a test case, we consider the BSVIE~\eqref{eq:BSVIE_1b}. For several values of $d$ we report the wall-clock runtime and the simulation errors errors defined in~\eqref{eq:sim_errors}.
\begin{table}[htbp]
  \centering
  \caption{Numerical results for various dimensions \(d\).}
  \label{tab:dimensions}
  \sisetup{
    detect-all,
    table-number-alignment = center,
    table-figures-integer  = 2,
    table-figures-decimal  = 4,
    table-figures-exponent = 1
  }
\begin{tabular}{c S S c S}
  \toprule
  {d} & {Err(\(Y\))} & {Err(\(Z\))} & {Runtime (s)} \\
  \midrule
     1   & 7.7e-05  & 8.2e-05  & 430 \\
    5   & 9.6e-05   &  2.9e-05  &  480 \\
    20   &  1.8e-4       &    7.9e-06     &    480  \\
    100   &  1.3e-3        &      4.5e-06    &   480   \\
   500   &     8.0e-3     &    2.9e-06      &    480   \\
    \bottomrule
  \end{tabular}
\end{table}

\color{black}
The algorithm runs on a Google Colab instance with an NVIDIA A100 GPU. Because much of the runtime comes from general processing overhead rather than the actual calculations, working with a 500-dimensional state takes nearly the same time as working with just one dimension. We expect that rewriting the loop in a more GPU-efficient style would make the algorithm substantially faster.\color{black}

\subsection{Examples of general FSDE-BSVIE systems}\label{sec:num_2}
In this section, we treat more general forms of FSDE-BSVIEs, where the numerical analysis, in the form given in this paper, no longer applies.
\subsubsection{Example 2: An FSDE-BSVIE system with a quadratic solution}\label{sec:quadratic_BSVIE}
Let $\mu,x_0\in\R^d$ and $\sigma\in\R^{d\times d}$ be constant, with $\sigma$ invertible. We consider the following system
\begin{equation}\label{eq:BSVIE_example}
\begin{dcases}
X_t \;=\; x_0+\int_0^t \mu \, X_s \,\d s \;+\; \int_0^t\text{diag}(\sigma\,X_s)\d W_s,
\\[4pt]
Y_t =
\bigl\langle\,t + X_t,X_T\bigr\rangle
-
\int_{t}^{T}
\mu^\top
\text{diag}\!\bigl(t + X_t\bigr)\,\sigma^{-1}\,
\bigl(\text{diag}\!\bigl(t + X_t\bigr)\bigr)^{-1}\,
Z_{t,s}\,\d s
-
\int_{t}^{T} Z_{t,s}^\top\,\d W_s,
\  (t,s)\in\Delta[0,T]^2.
\end{dcases}
\end{equation}
A direct calculation shows that the unique solution $\{(Y_t,Z_{t,s})\}_{(t,s)\in\Delta[0,T]^2}$ is given by the closed-form expressions
\begin{equation}\label{eq:ref_solution_BSVIE_2}
Y_t \;=\; \bigl\langle\,t + X_t,\;X_t\bigr\rangle
\quad\text{and}\quad
Z_{t,s} 
\;=\;
\text{diag}\bigl(t + X_t\bigr)\,\sigma\,X_s,
\quad
(t,s)\in\Delta[0,T]^2.
\end{equation}

Let $d=20$, $T=1$, $x_0=(1,\ldots,1)^\top$, $\mu=(-0.05,\ldots,-0.05)^\top$, $\sigma=\text{diag}(0.3,\allowbreak 0.375,\allowbreak 0.45,\allowbreak 0.375,\allowbreak 0.3,\allowbreak 0.375,\allowbreak 0.45,\allowbreak 0.375,\allowbreak 0.3,\allowbreak 0.375,\allowbreak 0.45,\allowbreak 0.375,\allowbreak 0.3,\allowbreak 0.375,\allowbreak 0.45,\allowbreak 0.375)$, $N=40$.

Figures~\ref{fig:Y_20d} and \ref{fig:Z_20d_quad} display the approximate $Y$- and $Z$-processes alongside their respective analytical reference solutions. For the $Y$-process, we present three representative sample paths, a sample mean, as well as the 5th and 95th sample percentiles. For the $Z$-process, one representative sample path is shown, together with a sample mean of its first component.

\begin{figure}[htp]
\centering
\begin{tabular}{cc}
\includegraphics[width=80mm]{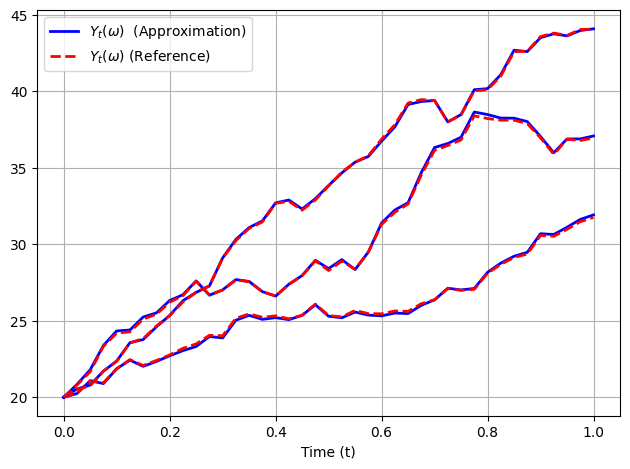}& \includegraphics[width=80mm]{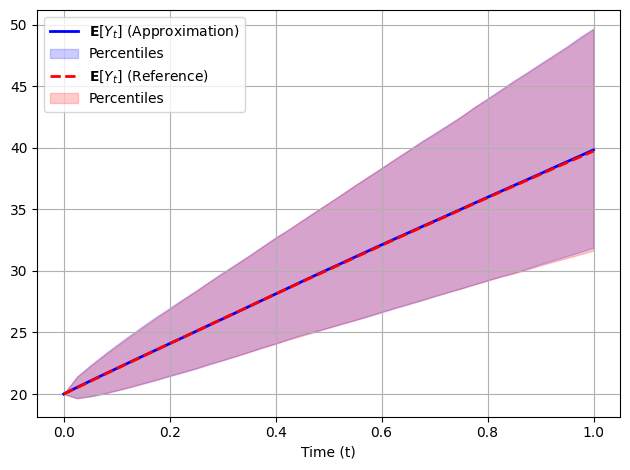}
\end{tabular}
\caption{Comparison of the approximated $Y$ with the reference solutions for Example~2. \textbf{Left:} Three representative sample paths. \textbf{Right:} The sample mean and the 5th and 95th percentiles.
}\label{fig:Y_20d}
\end{figure}

\begin{figure}[htp]
\centering
\begin{tabular}{c}
\includegraphics[width=160mm]{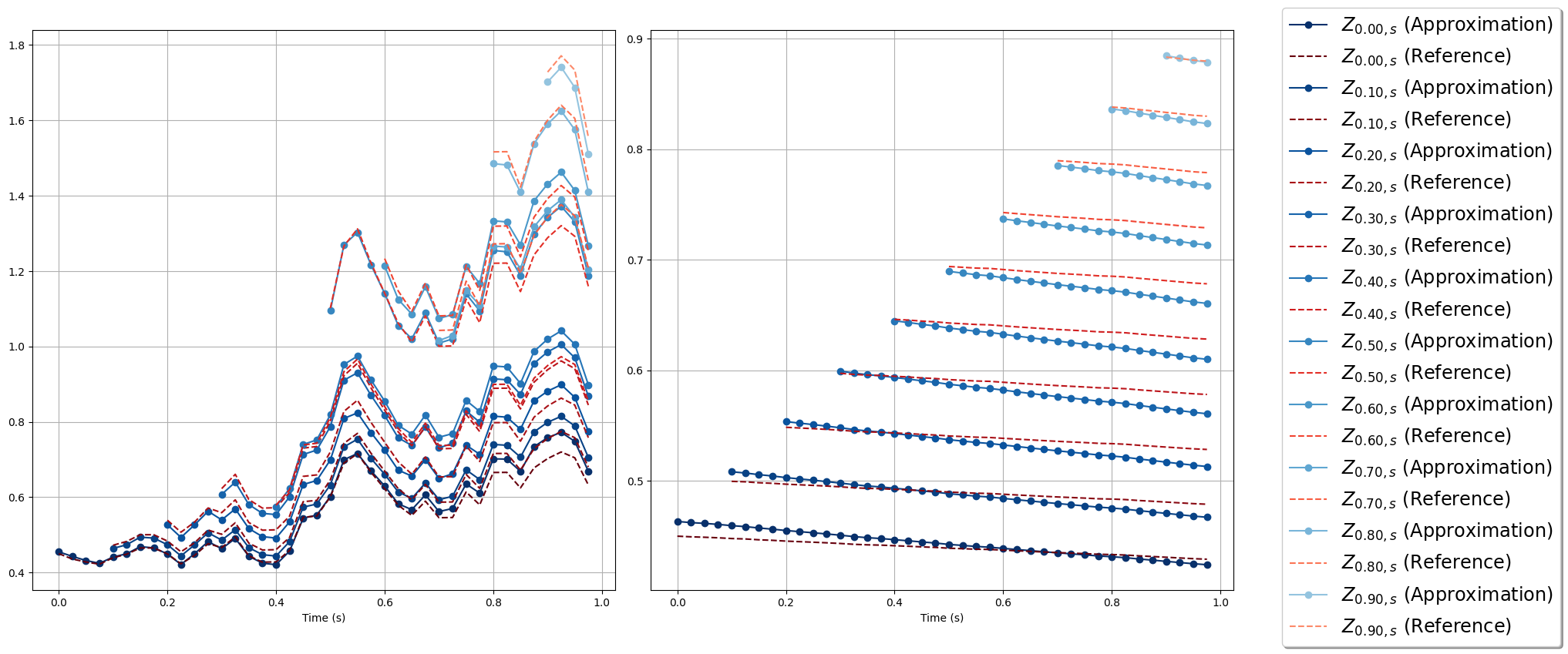}
\end{tabular}
\caption{Comparison of the approximated $Z_{t,s}$ with the reference solution for different values of $t$ for Example~2. \textbf{Left: } One representative sample path of the first (of 20) component of $Z_{t,s}$. \textbf{Right: } A sample mean the first component of $Z_{t,s}$.}\label{fig:Z_20d_quad}
\end{figure}

Because we do not carry out an empirical error analysis in this section, these figures do not offer insight into the accuracy of our approximation for the remaining 19 components of the $Z$-process. Consequently, Figure~\ref{fig:Z_20d_all_components} illustrates a representative sample path for our approximations of $Z_{0,s}$ and $Z_{0.5,s}$, compared to their corresponding analytical reference solutions.

\begin{figure}[htp]
\centering
\begin{tabular}{cc}
\includegraphics[width=80mm]{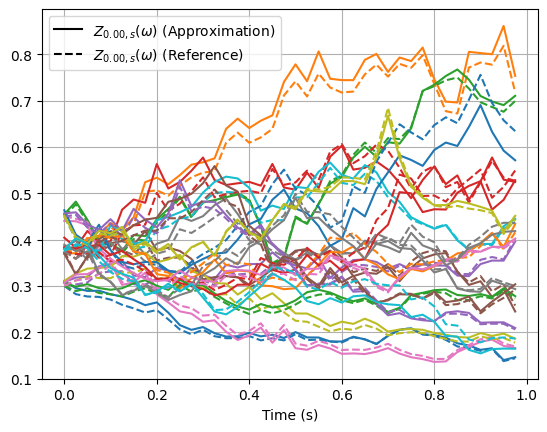}& \includegraphics[width=80mm]{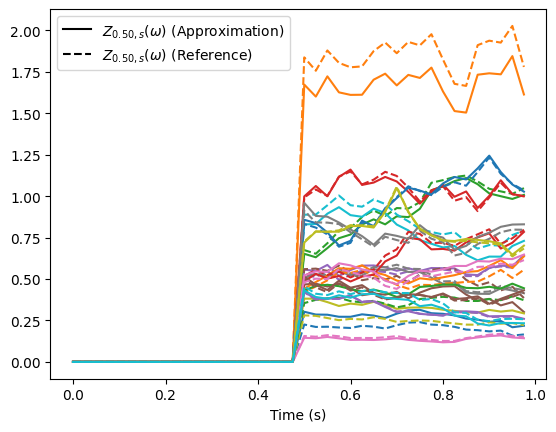}
\end{tabular}
\caption{Comparison of one representative sample path of the approximated $Z_{t,s}$ with its reference solution for Example~2. \textbf{Left:} $t=0$. \textbf{Right:} $t=0.5$. Since $Z_{t,s}$ is only defined for $s \ge t$, we set $Z_{t,s} = \mathbf{0}$ for $s < t$ purely for illustrative purposes.
}\label{fig:Z_20d_all_components}
\end{figure}
\subsubsection{Example 3: An FSDE-BSVIE system coupled in \textit{Y} and \textit{Z}}
For $d\in\N$, $a,x_0\in\R^d$, $b,c,k\in\R$, and $\sigma\in\R^{d\times d}$. We consider the following coupled FSDE-BSVIE system
\begin{equation}\label{eq:BSVIE_fully_coupled}
\begin{dcases}
X_t \;=\; x_0+\int_0^t \big(a+bZ_{s,s}\big)\,\d s \;+\;\int_0^t\big(c+Y_s\big) \sigma W_t,\quad\overline{X}_t=\frac{1}{d}\sum_{i=0}^dX_t^i,
\\[4pt]
Y_t \;=\;
t\sin(k\widebar{X}_T) \;+\;\displaystyle 
\int_{t}^{T}\Big(\frac{tk^2}{2d^2}\sin(k\overline{X}_s)(c+Y_s)^2\|\mathbf{1}^\top\sigma\|^2 - \cos(k\overline{X}_s)(t\mathbf{1}^\top a + sb\mathbf{1}^\top Z_{t,s})\big)\Big)\d s
\\ \;-\;
\int_{t}^{T} Z_{t,s}^\top\,\d W_s,
\quad
(t,s)\in\Delta[0,T]^2.
\end{dcases}
\end{equation}

A direct calculation shows that the unique solution $\{(Y_t,Z_{t,s})\}_{(t,s)\in\Delta[0,T]^2}$ is given by the closed-form expressions
\begin{equation}\label{eq:ref_solution_BSVIE_4}
Y_t \;=\; t\sin(k\widebar{X}_t)
\quad\text{and}\quad
Z_{t,s} 
\;=\;
t\cos(k\overline{X}_s)(c+s\sin(k\overline{X}_s)))\mathbf{1}^\top \sigma,
\quad
(t,s)\in\Delta[0,T]^2.
\end{equation}
Let $d=k=5$, $T=1$, $x_0=1$, $a=(0.15,0.075,0.0,-0.075,-0.15)^\top$, $\sigma=\text{diag}(0.4,0.5,0.6,0.7,0.9)$, $c=1.001$ ($c>1$ to guarantee enough ellipticity), and $N=40$. 

Figure~\ref{fig:XY_fully_coupled}-\ref{fig:Z_fully_coupled} shows our approximate $X$-,$Y$- and $Z$-processes, compared with the semi-analytic (we have to approximate the FSDE with an Euler--Maruyama scheme) reference solutions.  In particular, the reference solution for the FSDE is obtained by substituting \(Y_t = t \sin(k\widebar{X}_t)\), and $Z_{t,t}=\frac{tk}{d}\cos(k\overline{X}_t)(c+t\sin(k\overline{X}_t)))\mathbf{1}^\top \sigma$ into the drift coefficient and approximating the decoupled FSDE via the Euler--Maruyama scheme. 
\begin{figure}[htp]
\centering
\begin{tabular}{cc}
\includegraphics[width=80mm]{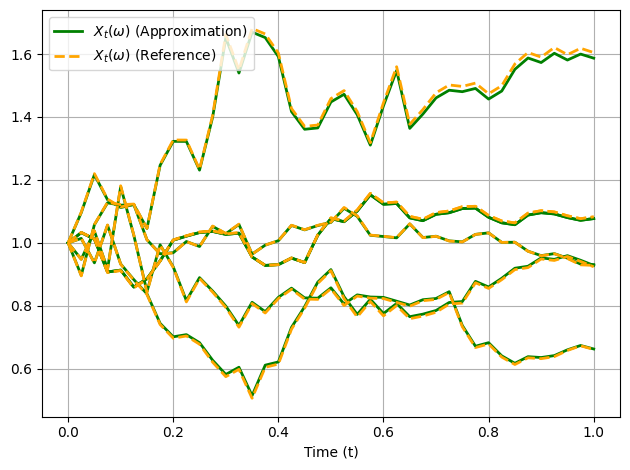}& \includegraphics[width=80mm]{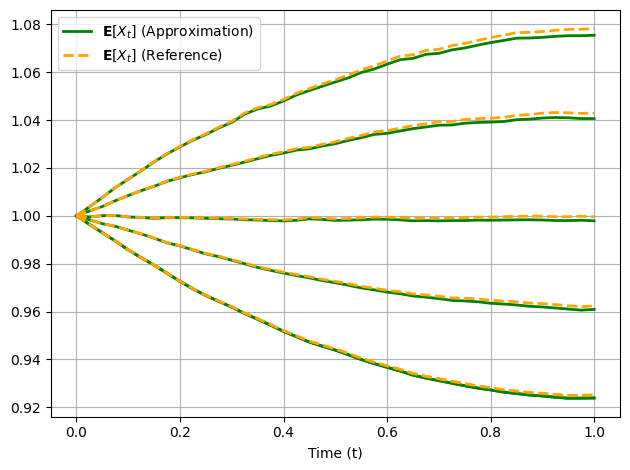}\\
\includegraphics[width=80mm]{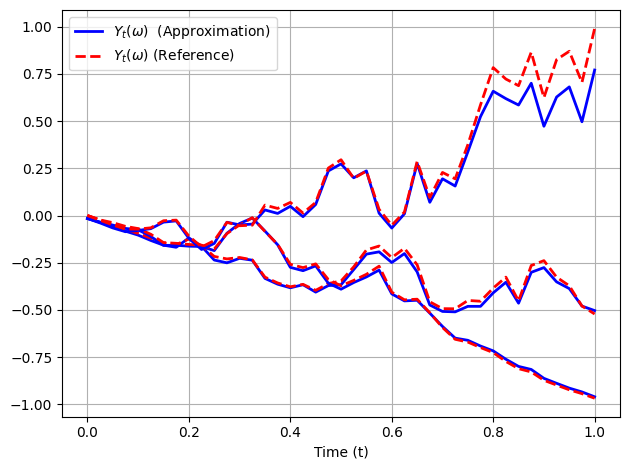}& \includegraphics[width=80mm]{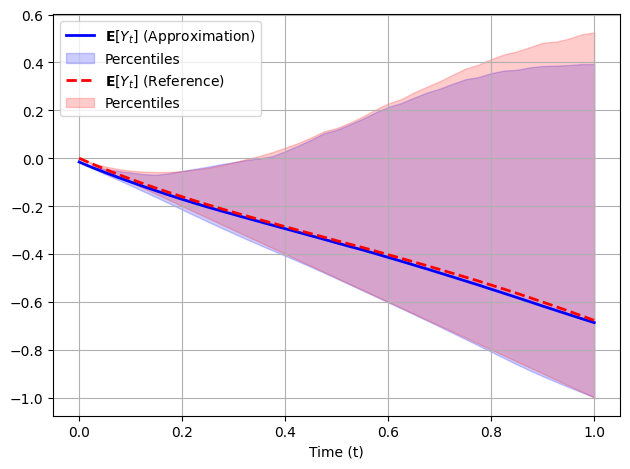}
\end{tabular}
\caption{Comparison of the approximated $X$ and $Y$ with the reference solutions for Example~3. \textbf{Left:} Three representative sample paths. \textbf{Right:} The sample mean and, for $Y$, the 5th and 95th percentiles.
}\label{fig:XY_fully_coupled}
\end{figure}

\begin{figure}[htp]
\centering
\begin{tabular}{c}
\includegraphics[width=160mm]{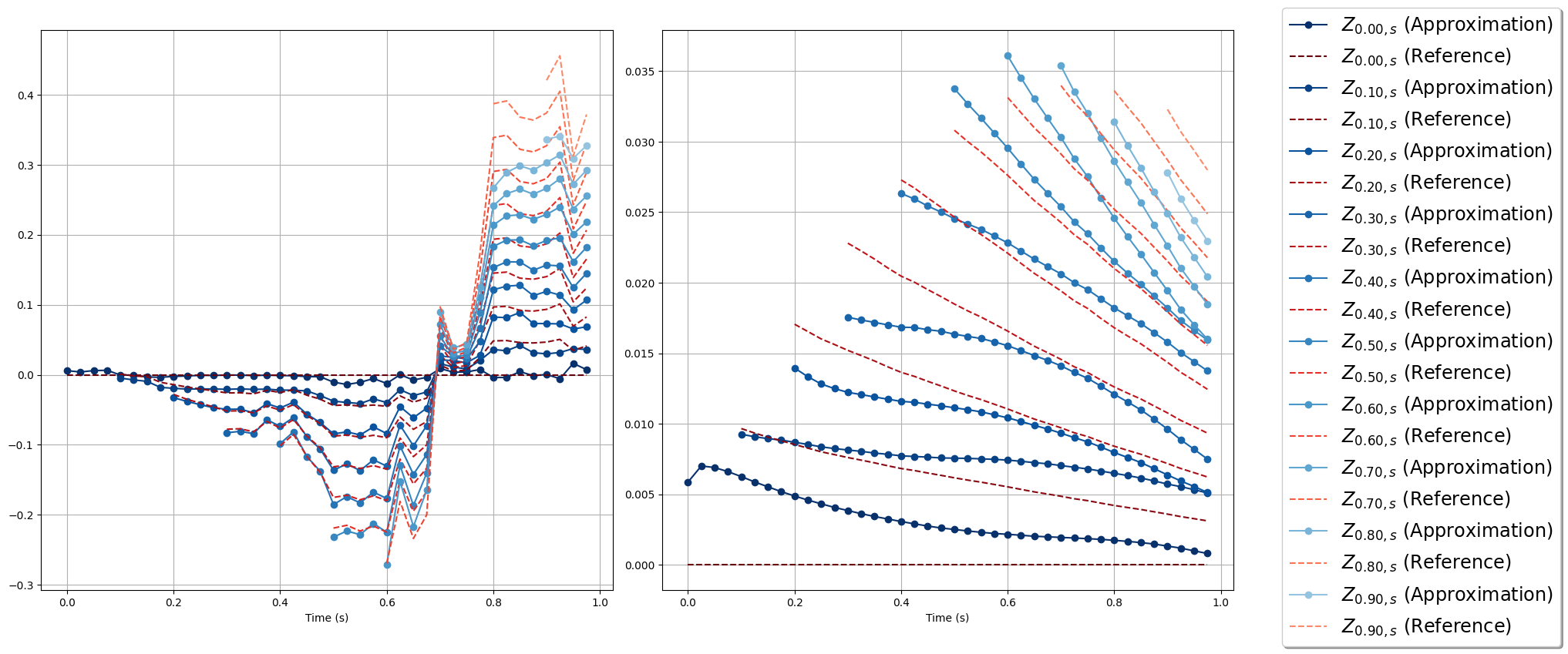}
\end{tabular}
\caption{Comparison of component 1 (of 5) of the approximated $Z_{t,s}$ with the reference solution for different values of $t$ for Example~3. \textbf{Left: } One representative sample path of $Z_{t,s}$. \textbf{Right: } A sample mean of $Z_{t,s}$.}\label{fig:Z_fully_coupled}
\end{figure}

\color{black}
\begin{remark}
It is well known that strongly coupled FBSDEs are difficult to approximate with forward-type deep-learning methods, see for instance \cite{andersson2023convergence,andersson2025deep}. There are, however, recent works establishing convergence for coupled systems under restrictive assumptions, for example, fully coupled McKean--Vlasov FBSDEs in \cite{reisinger2024posteriori} and weakly coupled FBSDEs in \cite{han2020convergence,negyesi2024generalized}. These results typically require the coupling to be of a rather mild nature. With this in mind, caution is needed when applying forward-type methods to strongly coupled FBSDEs or FSDE-BSVIEs. At the same time, our approach differs in two important respects. First, both the $Y$- and $Z$-processes are parametrized directly by neural networks. Second, the loss criterion aggregates residuals at evaluation times $t_k$, each constructed by rolling out the dynamics from $t_k$ to $T$ with $Z_{t_k,\cdot}$. In this way, the loss is local in $t_k$ but global through its multi-step propagation. By contrast, forward deep BSDE solvers perform a single Euler--Maruyama pass from $0$ to $T$ and tune $y_0$ and $Z$ solely to satisfy the terminal condition. Their loss is therefore concentrated at $T$ rather than built from per-$t_k$ rollouts. Whether the convergence issues observed in the FBSDE setting carry over to the BSVIE framework remains an open question.
\end{remark}

\color{black}

\subsection{Comparison with existing methods}
We do not benchmark our method against existing BSVIE solvers because, strictly speaking, no complete solver is yet available: the methods of Wang~\cite{wang2016numerical}, Hamaguchi \& Taguchi~\cite{hamaguchi2023approximations} and Pokalyuk~\cite{pokalyuk2012discretization} provide only the time-grid recursion while leaving the crucial conditional-expectation step unspecified. Implementing those schemes therefore requires choosing and tuning an additional regression, quantization or cubature layer—decisions that are outside the scope of their papers and would introduce a subjective bias into any comparison. Consequently, our tests focus on accuracy versus analytical solutions and on convergence-rate diagnostics, which are the only fair yardsticks at present.

\section*{Acknowledgments}
The research of Kristoffer Andersson was funded by the RIBA2022 grant.

\bibliographystyle{plain}
\bibliography{references}

\appendix
\section{Proof of Lemma 4.1}\label{app:proof_41}
In this appendix we provide the full proof of Lemma~\ref{lemma:main_res}, restated here for convenience.
\begin{lemma}
Assume that $h$ is small enough so that $(\color{black}8K_1^2 + 1\color{black})\,h<1$.
For $j \in \{1,2\}$, suppose 
  $\bigl\{\widehat{Y}_n^{k,\pi,j}, \widehat{Z}_n^{k,\pi,j}\bigr\}_{(k,n)\in \Delta^\pi[0,T]^2}$ 
  are two square integrable solutions to \eqref{eq:BSDE_family_disc_generic}. 
  Define the differences
  \begin{equation*}
    \delta Y_n^k \;=\; \widehat{Y}_n^{k,\pi,1} \;-\; \widehat{Y}_n^{k,\pi,2},
    \qquad
    \delta Z_n^k \;=\; \widehat{Z}_n^{k,\pi,1} \;-\; \widehat{Z}_n^{k,\pi,2}.
  \end{equation*}
  Then there exist constants $C_Y$ and $C_Z$, depending only on $T$ and $K_1$, such that 
  for every $(k,n)\in \Delta^\pi[0,T]^2$, the following estimates hold:
  \begin{align*}
    \E\bigl|\delta Y_n^k\bigr|^2 
    &\le
      C_Y \Bigl(\,
        \E\bigl|\delta Y_N^k\bigr|^2 
        \;+\; 
        \sum_{\ell=n}^{N-1}\E\bigl|\delta Y_N^\ell\bigr|^2\,h
      \Bigr),\quad
    \E\bigl|\delta Z_n^k\bigr|^2 \, h
    \le\ 
      C_Z \Bigl(\,
        \E\bigl|\delta Y_N^k\bigr|^2 
        \;+\; 
        \sum_{\ell=n}^{N-1}\E\bigl|\delta Y_N^\ell\bigr|^2\,h
      \Bigr).
  \end{align*}
    \begin{proof}
Let $\delta f_n^k=f(t_k,t_n,X_n^\pi,Y_n^{n,\pi,1},Z_n^{k,\pi,1}) -  f(t_k,t_n,X_n^\pi,Y_n^{n,\pi,2},Z_n^{k,\pi,2})$, then \color{black}
\[
\delta Y_{n+1}^k=\delta Y_n^k-\delta f_n^k\,h+(\delta Z_n^k)^\top\Delta W_n,
\qquad
\delta Z_n^k=\frac{1}{h}\,\E\!\big[\delta Y_{n+1}^k\Delta W_n\mid\F_{t_n}\big],
\]
so that in particular
\(
\E[\delta Y_{n+1}^k\mid\F_{t_n}]=\delta Y_n^k-\delta f_n^k h.
\) \color{black}
By the martingale representation theorem, there exists an adapted, square integrable process $(\delta Z_t^k)_{t\in[t_n,t_{n+1}]}$, such that
\begin{align*}
\delta Y_{n+1}^k= \E\big[\delta Y_{n+1}^k\,|\,\F_{t_n}\big] + \int_{t_n}^{t_{n+1}}\delta Z_t^k \d W_t=\delta Y_n^k - \delta f_n^kh + \int_{t_n}^{t_{n+1}}\delta Z_t^k \d W_t.
\end{align*}
 Since $\delta f_n^k$ and $\delta Y_n^k$ are $\F_{t_n}$-measurable, it holds that $\E\big[\delta Y_n^k\int_{t_n}^{t_{n+1}}\delta Z_t^k \d W_t\big]=\E\big[\delta f_n^k\int_{t_n}^{t_{n+1}}\delta Z_t^k \d W_t\big]\\=0$ and by Itô-Isometry, we have    
$ \E|\delta Y_{n+1}^k|^2= \E|\delta Y_n^k - \delta f_n^kh|^2 + \int_{t_n}^{t_{n+1}}\E|\delta Z_t^k|^2\d t$.
We distinguish two cases, depending on whether $n>k$ or $n=k$. The difference arises in how we estimate the term $\E\bigl|\delta Y_n^k - \delta f_n^k h\bigr|^2$.
\color{black}

\vspace{0.25cm}\noindent \textbf{Case I} ($n>k$):\newline
\color{black}
For the first term on the right-hand side, we apply Cauchy--Schwarz and Young’s
inequality with parameter $\lambda>0$, and use the Lipschitz continuity of $f$, to get
\begin{align*}
  \E\big|\delta Y_n^k - \delta f_n^k h\big|^2
  &\ge \E|\delta Y_n^k|^2 - 2h\,\E[\delta Y_n^k\,\delta f_n^k] \\
  &\ge \E|\delta Y_n^k|^2 - \lambda h\,\E|\delta Y_n^k|^2
      - \frac{2K_1^{2}}{\lambda}h\Big(\E|\delta Y_n^n|^2 + \E|\delta Z_n^k|^2\Big).
\end{align*}
\color{black}
We further observe that 
$\E\bigl[\int_{t_n}^{t_{n+1}}\delta Z_t^k\mathrm{d}t\,\big|\,\F_{t_n}\bigr] = h\delta Z_n^k$, see e.g., \cite[Lemma 1]{han2020convergence},
 which, together with the Cauchy--Schwarz inequality, implies 
$\int_{t_n}^{t_{n+1}}\E|\delta Z_t^k|^2\mathrm{d}t \geq \E|\delta Z_n^k|^2h.$ Collecting terms then yields
\color{black}
\begin{align*}
  \E|\delta Y_{n+1}^k|^2
  \;\ge\; (1-\lambda h)\,\E|\delta Y_n^k|^2
          - \frac{2K_1^{2}}{\lambda}h\,\E|\delta Y_n^n|^2
          + \Big(1-\frac{2K_1^{2}}{\lambda}\Big)h\,\E|\delta Z_n^k|^2.
\end{align*}
Choose, for instance, $\lambda=4K_1^{2}$. Then
\begin{align*}
  (1-4K_1^{2}h)\,\E|\delta Y_n^k|^2
  + \tfrac12 h\,\E|\delta Z_n^k|^2
  \;\le\; \,\E|\delta Y_{n+1}^k|^2 + \frac{1}{2}h\,\E|\delta Y_n^n|^2.
\end{align*}
For sufficiently small $h$ such that $4K_1^{2}h<1$, this implies
\begin{align}
\label{ineq:Ynk_Znk}
\begin{split}
      \E|\delta Y_n^k|^2
  \;\le\; (1-4K_1^{2}h)^{-1}\!\Big( \E|\delta Y_{n+1}^k|^2 + \tfrac12 h\,\E|\delta Y_n^n|^2 \Big),\quad h\,\E|\delta Z_n^k|^2
  \;\le\; \,2\E|\delta Y_{n+1}^k|^2 + h\,\E|\delta Y_n^n|^2.
  \end{split}
\end{align}
Setting $A_1=(1-4K_1^{2}h)^{-1}$ and iterating the first inequality above yield
\begin{align}\label{ineq:deltaY_nk}
  \E|\delta Y_n^k|^2
  \;\le\; A_1^{\,N-n}\E|\delta Y_N^k|^2
        + \frac12\sum_{\ell=n}^{N-1}A_1^{\,\ell+1-n}\,\E|\delta Y_\ell^\ell|^2\,h.
\end{align}
\color{black}
\vspace{0.25cm}\newline \textbf{Case II} ($n=k$):\newline
We again apply Cauchy--Schwarz and Young’s inequality (with parameter $\lambda>0$)
and use the Lipschitz continuity of $f$ to obtain
\color{black}
\begin{align*}
  \E|\delta Y_{n+1}^n|^2
  &\ge \E|\delta Y_n^n|^2 - 2h\,\E[\delta Y_n^n\,\delta f_n^n] + h\,\E|\delta Z_n^n|^2 \\
  &\ge \E|\delta Y_n^n|^2 - \lambda h\,\E|\delta Y_n^n|^2
      - \frac{2K_1^2}{\lambda}h\Big(\E|\delta Y_n^n|^2+\E|\delta Z_n^n|^2\Big)
      + h\,\E|\delta Z_n^n|^2 \\
  &= \Big(1-\lambda h-\tfrac{2K_1^2}{\lambda}h\Big)\E|\delta Y_n^n|^2
     + \Big(1-\tfrac{2K_1^2}{\lambda}\Big)h\,\E|\delta Z_n^n|^2.
\end{align*}
\color{black}
\color{black}
As in Case I, choose $\lambda=4K_1^2$. Then
\color{black}
\[
  \E|\delta Y_{n+1}^n|^2
  \;\ge\; \Big(1-(4K_1^2+\tfrac12)h\Big)\E|\delta Y_n^n|^2
          + \tfrac12 h\,\E|\delta Z_n^n|^2.
\]
Hence
\begin{align}\label{ineq:delta_Y_nn_Z_nn}
      \E|\delta Y_n^n|^2\le \Big(1-(4K_1^2+\tfrac12)h\Big)^{-1}\E|\delta Y_{n+1}^n|^2,
  \qquad
  h\,\E|\delta Z_n^n|^2 \le 2\,\E|\delta Y_{n+1}^n|^2.
\end{align}
\color{black}
Setting \(A_2=\Big(1-(4K_1^2+\tfrac12)h\Big)^{-1}\) and combining~\eqref{ineq:deltaY_nk} and the first inequality in~\eqref{ineq:delta_Y_nn_Z_nn} yield
\begin{align}
\label{ineq:sym}
\begin{split}
        \E|\delta Y_n^n|^2
    &
    \leq
    A_2A_1^{N-n-1}\E|\delta Y_N^n|^2 + \frac{A_2}{2}\sum_{\ell=n+1}^{N-1}A_1^{\ell-n}\E|\delta Y_\ell^\ell|^2h.
    \end{split}
\end{align}
For \(m\in\{0,1,\ldots,N-1\}\), set \(u_m=A_1^{-(N-m)}\E|\delta Y_m^m|^2\), then~\eqref{ineq:sym} can be written as
\begin{align*}
    u_n\leq A_2A_1^{-1}\E|\delta Y_N^n|^2 + \frac{A_2}{2}\sum_{\ell=n+1}^{N-1}u_\ell h.
\end{align*}
Applying the discrete Grönwall's inequality to the above yields
\begin{align*}
    u_n
    \leq 
    A_2A_1^{-1}\E|\delta Y_N^n|^2\e^{\frac{A_2}{2}(N-n)h}.
\end{align*}
Since \((N-n)h=T-t_n\) and \(\E|\delta Y_n^n|^2=u_nA_1^{N-n}\), we obtain 
\begin{align}\label{ineq:Y_nn}
    \E|\delta Y_n^n|^2
    \leq
    A_1^{N-n-1}A_2\e^{\frac{A_2}{2}(T-t_n)}\E|\delta Y_N^n|^2.
\end{align}
This completes Case II. 

Using \eqref{ineq:Y_nn} in \eqref{ineq:deltaY_nk} and noting that $A_1^{\ell-n}$ is increasing in $\ell$, we obtain
\begin{equation}\label{ineq:Y_nk}
    \E|\delta Y_n^k|^2
    \le
    A_1^{N-n}\E|\delta Y_N^k|^2
    +
    \frac{1}{2}A_2\,A_1^{N-n}\e^{\frac{A_2}{2}T}
    \sum_{\ell=n}^{N-1}\E|\delta Y_N^\ell|^2\,h.
\end{equation}
To bound $h\,\E|\delta Z_n^k|^2$, note from the second inequalities in \eqref{ineq:Ynk_Znk} and \eqref{ineq:delta_Y_nn_Z_nn} that, for all $(k,n)\in\Delta^\pi[0,T]^2$,
\[
  h\,\E|\delta Z_n^k|^2 \;\le\; 2\,\E|\delta Y_{n+1}^k|^2 \;+\; h\,\E|\delta Y_n^n|^2,
\]
(and for $k=n$ a sharper version without the last term holds). Using \eqref{ineq:deltaY_nk} at time $n+1$ and \eqref{ineq:Y_nn}, together with $\e^{\frac{A_2}{2}(T-t_\ell)}\le \e^{\frac{A_2}{2}T}$, we obtain
\begin{equation}\label{ineq:Znk_final}
  h\,\E|\delta Z_n^k|^2
  \;\le\;
  2A_1^{N-n-1}\,\E|\delta Y_N^k|^2
  \;+\;
  A_2\,\e^{\frac{A_2}{2}T}\,A_1^{N-n-1}
  \sum_{\ell=n}^{N-1}\E|\delta Y_N^\ell|^2\,h.
\end{equation}

The final step of the proof is to show that there exist constants \(C_Y, C_Z > 0\) independent of \(h\).
Take \(h\) sufficiently small, such that
$h\leq\frac{1}{8K_1^2+1}$, then $4K_1^2h\le \tfrac12$ and $(4K_1^2+\tfrac12)h\le \tfrac12$. Hence, for \(0\leq n \leq N\),
\[
A_1^n \le \e^{8K_1^2 T},\qquad
A_2\le 2,\qquad
\e^{\frac{A_2}{2}T}\le \e^{T}.
\]
Applying these bounds to \eqref{ineq:Y_nk} and \eqref{ineq:Znk_final} yields, for all $(k,n)\in\Delta^\pi[0,T]^2$,
\[
  \E|\delta Y_n^k|^2
  \le
  \e^{8K_1^2T}\,\E|\delta Y_N^k|^2
  + \e^{(8K_1^2+1)T}\,\sum_{\ell=n}^{N-1}\E|\delta Y_N^\ell|^2\,h,
\]
and
\[
  h\,\E|\delta Z_n^k|^2
  \le
  2\e^{8K_1^2T}\,\E|\delta Y_N^k|^2
  + 2\e^{(8K_1^2+1)T}\,\sum_{\ell=n}^{N-1}\E|\delta Y_N^\ell|^2\,h.
\]
Hence the lemma holds with
\[
C_Y := \e^{(8K_1^2+1)T},
\qquad
C_Z := 2\e^{(8K_1^2+1)T}.
\]
\end{proof}
\end{lemma}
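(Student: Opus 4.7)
My plan is to exploit the discrete martingale structure of the recursion to isolate the $Y$- and $Z$-differences, then run a backward Grönwall argument that respects the two-time indexing. Setting $\delta f_n^k := f(t_k,t_n,X_n^\pi,\widehat Y_n^{n,\pi,1},\widehat Z_n^{k,\pi,1}) - f(t_k,t_n,X_n^\pi,\widehat Y_n^{n,\pi,2},\widehat Z_n^{k,\pi,2})$, subtracting the two copies of \eqref{eq:BSDE_family_disc_generic} gives $\delta Y_{n+1}^k = \delta Y_n^k - \delta f_n^k h + (\delta Z_n^k)^\top \Delta W_n$. Projecting onto $\F_{t_n}$ identifies $\delta Y_n^k - \delta f_n^k h$ as the conditional mean of $\delta Y_{n+1}^k$ and $h\,\delta Z_n^k$ as its $\Delta W_n$-weighted conditional moment. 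Extending to a continuous-time process on $[t_n,t_{n+1}]$ via the martingale representation theorem and applying It\^o isometry yields the clean identity $\E|\delta Y_{n+1}^k|^2 = \E|\delta Y_n^k - \delta f_n^k h|^2 + \int_{t_n}^{t_{n+1}} \E|\delta Z_t^k|^2\,dt$, while a conditional Jensen argument gives $\int_{t_n}^{t_{n+1}} \E|\delta Z_t^k|^2\,dt \ge h\,\E|\delta Z_n^k|^2$.

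Next I would expand the first square, invoke Young's inequality with a tunable parameter $\lambda > 0$, and use the Lipschitz bound on $f$ from Assumption~\ref{ass:1} to obtain, after choosing $\lambda$ of order $K_1^2$, a one-step recursion of the form $(1 - c_1 K_1^2 h)\E|\delta Y_n^k|^2 + \tfrac12 h\,\E|\delta Z_n^k|^2 \le \E|\delta Y_{n+1}^k|^2 + c_2 h\,\E|\delta Y_n^n|^2$. The essential structural feature here is that the driver's dependence on $\widehat Y_n^{n,\pi}$ produces a diagonal source term $\E|\delta Y_n^n|^2$, regardless of the off-diagonal index $k$.

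The crux of the argument is then a two-stage closure dictated by this coupling. In the diagonal case $n = k$, the source $\delta Y_n^n$ coincides with $\delta Y_n^k$, so the recursion becomes closed in $\E|\delta Y_n^n|^2$ alone and yields by direct backward iteration $\E|\delta Y_n^n|^2 \le A_2^{N-n}\,\E|\delta Y_N^n|^2$ for an amplification factor $A_2 = (1 - cK_1^2 h)^{-1}$. In the off-diagonal case $n > k$, I would substitute this diagonal bound into the recursion and then apply a discrete Grönwall inequality, backward in $n$, to recover the announced form of $\E|\delta Y_n^k|^2$. The $Z$-estimate follows at no extra cost: instead of discarding the nonnegative $Z$-term from the one-step inequality, keep it, rearrange, and control the right-hand side by the $Y$-estimates just established.

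The main obstacle I anticipate is precisely this two-time coupling through $\widehat Y_n^{n,\pi}$, which prevents a single-index Grönwall on $\E|\delta Y_n^k|^2$ and forces one to settle the diagonal quantity first and propagate it into the off-diagonal indices. A secondary technical point is checking that the constants $C_Y, C_Z$ remain uniformly bounded in $h$: once the smallness hypothesis ensures factors of the form $(1 - c K_1^2 h)^{-1}$ stay below, say, $2$, their $N$-fold products are controlled by $e^{c K_1^2 T}$, which is exactly the role of the smallness condition on $h$ in the statement. With these ingredients in place, the two bounds can be read off directly, with $C_Y$ and $C_Z$ depending only on $T$ and $K_1$.
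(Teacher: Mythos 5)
Your overall architecture matches the paper's: the martingale-representation/It\^o-isometry identity for the one-step error, the Young-inequality recursion producing the diagonal source term $\E|\delta Y_n^n|^2$, the ``settle the diagonal first, then propagate to the off-diagonal'' strategy, and the uniform-in-$h$ control of the amplification factors via the smallness condition are all exactly the steps taken in Appendix~\ref{app:proof_41}.

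The gap is in how you settle the diagonal. You claim that for $n=k$ the recursion ``becomes closed in $\E|\delta Y_n^n|^2$ alone'' and yields $\E|\delta Y_n^n|^2\le A_2^{N-n}\E|\delta Y_N^n|^2$ by direct backward iteration. It does not: the one-step diagonal inequality reads $\E|\delta Y_n^n|^2\le A_2\,\E|\delta Y_{n+1}^{n}|^2$, and the quantity on the right carries superscript $n$ and subscript $n+1$ --- it is an \emph{off-diagonal} value along the row $k=n$, not $\delta Y_{n+1}^{n+1}$. To push it to the terminal time you must traverse the off-diagonal steps $(n,m)$ for $m>n$, each of which reintroduces a later diagonal source $\E|\delta Y_m^m|^2$. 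The diagonal values therefore satisfy a genuinely coupled system,
\begin{equation*}
\E|\delta Y_n^n|^2\;\le\;A_2A_1^{N-n-1}\,\E|\delta Y_N^n|^2+\tfrac{A_2}{2}\sum_{\ell=n+1}^{N-1}A_1^{\ell-n}\,\E|\delta Y_\ell^\ell|^2\,h,
\end{equation*}
and closing this self-referential family is precisely where the discrete Gr\"onwall inequality is needed (the paper applies it to $u_m=A_1^{-(N-m)}\E|\delta Y_m^m|^2$). You do invoke a Gr\"onwall, but you place it in the off-diagonal stage, where --- once the diagonal is controlled --- a plain iteration of the one-step inequality followed by substitution suffices and no Gr\"onwall is required. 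In short, the one genuinely delicate step of the lemma is the step your plan asserts away; moving your Gr\"onwall from the off-diagonal stage to the diagonal stage repairs the argument and recovers the paper's proof.
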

\end{document}